\newcommand{\be}{\begin{equation}}
\newcommand{\ee}{\end{equation}}
\newcommand{\beq}{\begin{eqnarray}}
\newcommand{\eeq}{\end{eqnarray}}
\newtheorem{thm}{Theorem}[section]
\newtheorem{lma}{Lemma}[section]
\newtheorem{prop}{Proposition}[section]
\theoremstyle{remark}
\newtheorem{rem}{Remark}[section]
\numberwithin{equation}{section}
\def\be{\begin{equation}}
\def\ee{\end{equation}}
\def\bee{\begin{equation*}}
\def\eee{\end{equation*}}
\def\lf{\left}
\def\ri{\right}
\def\K{K\"ahler }
\def\KR{K\"ahler-Ricci }
\def\Ric{\text{\rm Ric}}
\def\Rm{\text{\rm Rm}}
\def\p{\partial}
\def\yheat{\lf(\frac{\p}{\p t}-(n-1)\Delta_{g(t)}\ri)}
\def\e{\varepsilon}
\def\a{{\alpha}}
\def\b{{\beta}}
\begin{document}

\title[]
{Gap Theorem on locally conformally flat manifold}

\author{Ming Hsiao}
\address[Ming Hsiao]{Department of Mathematics
National Taiwan University
and
National Center for Theoretical Sciences,
Math Division,
Taipei 10617,
Taiwan
}
\email{r12221002@ntu.edu.tw}

\author{Man-Chun Lee}
\address[Man-Chun Lee]{Department of Mathematics, The Chinese University of Hong Kong, Shatin, N.T., Hong Kong
}
\email{mclee@math.cuhk.edu.hk}

\renewcommand{\subjclassname}{
  \textup{2020} Mathematics Subject Classification}
\subjclass[2020]{ Primary 53E99, 53C20, 53C21
}

\date{\today}

\begin{abstract}
In this work, we study a gap phenomenon in locally conformally flat Riemannian manifolds with non-negative Ricci curvature. We construct complete solutions to the Yamabe flow that exhibit instantaneous bounded curvature as they evolve. Using this, we demonstrate that if the curvature decays quickly enough in an integral sense, then the manifold must be flat. This partially generalizes the results of Chen-Zhu and Ma.
\end{abstract}

\keywords{locally conformally flat, gap Theorem}

\maketitle

\markboth{Ming Hsiao, Man-Chun Lee}{}

\section{Introduction}
 
Let $(M^n,g_0)$ be a complete Riemannian manifold. 
We say that $(M,g)$ is \textit{locally conformally flat} if, for any point $p\in M$, there exists a coordinate chart conformal to the open set in Euclidean space. This is a high-dimensional analog of isothermal coordinate and allows for elegant structural results with curvature lower bounds, for example see  \cite{SchoenYau1988}.

In the compact case, there have been many studies in the literature, for example, see \cite{Cheng2001,Noronha1993} and the reference therein. In this work, we are primarily interested in studying complete non-compact conformally flat manifolds with $\Ric\geq 0$. Indeed, their topology has been fully classified by Carron-Herzlich \cite{CarronHerzlich2006}, see also the earlier work of Zhu \cite{Zhu1994}. More precisely if we restrict our attention to the non-compact case, the result of Carron-Herzlich implies that $(M,g_0)$ can only be locally the cylinder $\mathbb{R}\times \mathbb{S}^{n-1}$, the flat $\mathbb{R}^n$ or globally conformal to $\mathbb{R}^n$ with non-flat metric.  The Euclidean space $\mathbb{R}^n$ is then the only space that admits non-flat complete conformally flat metrics with decaying non-negative Ricci curvature. Indeed, it was already known by the work of Chen-Zhu \cite{ChenZhu2002} that the metric must be flat if the  metric satisfies fast curvature decay assumptions at infinity, see also \cite{Ma2016}. Furthermore, Carron-Herzlich \cite{CarronHerzlich2006} gave an explicit example showing that the result of Chen-Zhu is optimal within some class of metrics. And we also refer readers to \cite{ESS, Drees,MokSiuYau,GreeneWu,GPZ} for some classical results on gap phenomenon.

In this work, we are interested in the gap phenomenon within the class of metrics without a-priori bounded curvature conditions. This is partially motivated by the optimal gap Theorem of Ni \cite{Ni} concerning complete non-compact \K manifold with holomorphic bisectional curvature $\mathrm{BK}\geq 0$. Specifically, it was shown by Ni that if a complete \K manifold $(M,g_0)$ satisfies $\mathrm{BK}\geq 0$ and 
\begin{equation}
k(x_0,r):= \fint_{B_{g_0}(x,r)}\mathcal{R}(g_0)\, d\mathrm{vol}_{g_0}=o(r^{-2})
\end{equation}
as $r\to+\infty$, for some $x_0\in M$, then $(M,g_0)$ is necessarily flat. Here $\mathcal{R}(g)$ denotes the scalar curvature of metric $g$. The gap Theorem of Ni does not rely on any bounded curvature assumption, see also the earlier work of Chen-Zhu \cite{ChenZhu2003} using \KR flow. By now, it is still unclear to what extent Ni's optimal gap Theorem can be extended to Riemannian case. In contrast, Chen-Zhu \cite{ChenZhu2002} showed that under bounded curvature assumption, if $k(x,r)=o(r^{-2})$ uniformly in $x\in M$, then locally conformally flat $(M,g)$ with $\Ric\geq 0$ must be flat. Motivated by these two results, we study locally conformally flat $(M,g)$ with $\Ric\geq 0$ under stronger asymptotic assumption on $k(\cdot, r)$ but without assuming bounded curvature.

The following is our main result in this work:
\begin{thm}\label{thm:main}
For all $n\geq 5$, there exists $\e_0(n)>0$ such that the following holds: Suppose $(M,g_0)$ is a complete non-compact manifold which is locally conformally flat and has $\Ric(g_0)\geq 0$. If 
\begin{equation}
\int^\infty_0 r \fint_{B_{g_0}(x,r)}\mathcal{R}(g_0)\, d\mathrm{vol}_{g_0}\,  dr\leq \e_0
\end{equation}
for all $x\in M$, then $(M,g_0)$ is flat.
\end{thm}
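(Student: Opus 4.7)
The plan is to evolve $(M,g_0)$ by the Yamabe flow $\partial_t g = -\mathcal{R}(g)\, g$ and exploit its conformal and parabolic smoothing to reduce the problem to a setting with controlled curvature. Local conformal flatness is preserved automatically, and on a locally conformally flat manifold one checks that $\Ric\geq 0$ is preserved via the maximum principle applied to the evolution of $\Ric$, using the fact that the Riemann tensor is algebraically determined by $\Ric$ and $\mathcal{R}$, so the reaction terms have a favourable sign. A useful reformulation of the hypothesis comes from Fubini combined with Bishop--Gromov ($\Ric\geq 0$):
\[
\int_M \mathcal{R}(g_0)(y)\, d_{g_0}(x,y)^{2-n}\, dV_{g_0}(y) \leq C(n)\,\e_0
\]
uniformly in $x$, so the hypothesis is morally a uniform bound on the Newtonian potential of $\mathcal{R}(g_0)$. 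Via the conformal representation $g_0=u^{4/(n-2)}g_E$ furnished (in the nontrivial case) by the Carron--Herzlich classification together with the equation $-\Delta u = \tfrac{n-2}{4(n-1)}\mathcal{R}(g_0)u^{(n+2)/(n-2)}$, this forces $u$ to be close to a constant in $L^\infty$.

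The heart of the argument is the construction of a complete Yamabe flow $g(t)$, $t\in(0,T]$, enjoying the instantaneous smoothing estimate $|\mathcal{R}|_{g(t)}\leq C(n)/t$, starting from the rough initial data $g_0$ without any a priori curvature upper bound. Since in the locally conformally flat category the Yamabe flow reduces to a scalar fast-diffusion equation for the conformal factor, I would approximate $g_0$ by smooth complete locally conformally flat metrics of bounded curvature, run the flow on the approximations and extract the $C/t$ bound via a pseudolocality-type argument tailored to the Yamabe flow, and then pass to a limit flow that preserves completeness, local conformal flatness, and $\Ric\geq 0$. This step is the main obstacle: the absence of a two-sided curvature bound on $g_0$ forces one to combine fast-diffusion regularity with a curvature pseudolocality statement compatible with the conformal setting, and to ensure that the conformal factor remains bounded away from $0$ and $\infty$ uniformly along the approximation.

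Once such a flow is in place, the Newtonian-potential smallness propagates to each $g(t)$ via volume comparison and the conformal change-of-measure formula. Combining $\mathcal{R}\geq 0$, the $C/t$ bound, and this integral smallness, a maximum-principle / Moser-iteration argument on
\[
\partial_t \mathcal{R} = (n-1)\Delta_{g(t)}\mathcal{R} + \mathcal{R}^2
\]
would force $\mathcal{R}(g(t))\equiv 0$ for small $t>0$. The vanishing of the trace $\mathcal{R}$ together with $\Ric\geq 0$ then gives $\Ric\equiv 0$, and since $(M,g(t))$ has vanishing Weyl tensor the full Riemann tensor is zero. Because the Yamabe flow stays in the conformal class of $g_0$ and returns to $g_0$ as $t\to 0$, the original metric $g_0$ must itself be flat. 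The dimension restriction $n\geq 5$ is expected to enter through the fast-diffusion exponent $(n-2)/(n+2)$ lying in the correct smoothing regime, and through the integrability of the Newtonian kernel required in the maximum-principle step.
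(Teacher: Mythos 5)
Your proposal captures the right ambient philosophy (evolve by Yamabe flow, exploit conformality and $\Ric\geq 0$ preservation) but diverges from the paper in two substantive ways, one of which is a genuine gap that would sink the argument.

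\textbf{Construction of the flow.} You propose approximating $g_0$ by complete locally conformally flat metrics of bounded curvature, running the flow on the approximants, and extracting a $|\mathcal{R}|\leq C/t$ smoothing bound via a pseudolocality statement. The paper instead follows Ma: take a compact exhaustion $\{\Omega_k\}$, solve the Dirichlet problem on each $\Omega_k$, and obtain the crucial \emph{uniform conformality} $e^{4w/(n-2)}g_0\leq g(t)\leq g_0$ from a barrier function $w$ solving $\Delta_{g_0}w=\tfrac{n-2}{4(n-1)}\mathcal{R}(g_0)$ (your Newtonian-potential reformulation is exactly the right hypothesis for this Poisson equation, but you did not make the connection to how the flow is built). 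Uniform conformality is then bootstrapped into $|\Rm(g(t))|\leq 2t^{-1}$ using heat-kernel bounds for the conjugate heat operator (valid under uniform conformality, which controls the distance distortion) together with a Hochard-type iteration; none of this uses a Euclidean conformal representation from Carron--Herzlich, which the paper explicitly avoids. Your route is not obviously wrong, but you would have to explain why the approximants can be chosen complete, conformally flat, of bounded curvature, and with a uniform modulus of control, and why the pseudolocality you invoke holds for Yamabe flow without distance estimates --- this is precisely the difficulty the paper's uniform-conformality/heat-kernel machinery is designed to sidestep.

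\textbf{Final rigidity step --- genuine gap.} You claim that a maximum-principle/Moser-iteration argument on $\partial_t\mathcal{R}=(n-1)\Delta\mathcal{R}+\mathcal{R}^2$, combined with the $C/t$ bound and the initial integral smallness, ``would force $\mathcal{R}(g(t))\equiv 0$ for small $t>0$.'' This cannot work as stated: the reaction term $\mathcal{R}^2$ is nonnegative and pushes $\mathcal{R}$ \emph{up}, not down, so no small-time maximum principle annihilates $\mathcal{R}$; small initial data merely remains small for a short time. The mechanism that actually yields vanishing is global in time: one needs (i) an \emph{immortal} flow, (ii) the uniform conformality bound giving $\int_0^\infty \mathcal{R}(x,\tau)\,d\tau=-\tfrac{4}{n-2}\log u(x,\infty)\leq C<\infty$, and (iii) Chow's Harnack inequality for conformally flat Yamabe flow with $\Ric\geq 0$ and bounded curvature, which says $t\mapsto t\,\mathcal{R}(x,t)$ is nondecreasing. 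Monotonicity plus a finite time-integral of $\mathcal{R}$ forces $t\mathcal{R}(x,t)\to 0$, hence $\mathcal{R}\equiv 0$ for all $t>0$, and then $\mathcal{R}(g_0)=0$ by continuity at $t=0$. Without the Harnack monotonicity and the long-time integral bound, the conclusion does not follow.

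A minor point: the restriction $n\geq 5$ enters the paper through the reaction coefficient $\b=2$ in the evolution inequality for the Ricci lower bound, which must satisfy $\b<n/2$ for the localized maximum principle to close; it is not a fast-diffusion-exponent phenomenon as you suggest.
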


We stress that the assumption is scaling invariant. We remark here that $(M,g_0)$ is immediately globally conformally flat under the assumption in Theorem~\ref{thm:main} if it is non-flat, thanks to the work of Carron-Herzlich \cite{CarronHerzlich2006}. So by way of contradiction, we might assume $M$ to be topologically $\mathbb{R}^n$, but we do not need this fact.  This is the locally conformally flat analogy of the earlier work of Chan and the second named author \cite{ChanLee2023}, where locally conformally flat with $\Ric\geq 0$ is replaced by manifold with non-negative complex sectional curvature. 
\medskip

Our strategy is deeply influenced by the work of Hamilton \cite{HamiltonJDG} in Ricci flow where parabolic deformation is used to regularize the metric to the canonical model. In the case of locally conformally flat, we use the Yamabe flow. The Yamabe flow $g(t)$ is a one parameter family of metrics such that  
\begin{equation}
\partial_t g(t)=-\mathcal{R}_{g(t)}\cdot  g(t).
\end{equation}
In particular, this is a conformal deformation of metrics. The Yamabe flow was first proposed by Richard Hamilton in the 1980s as a tool for
constructing metrics of constant scalar curvature in a given conformal class \cite{Hamilton}. In the compact case, Hamilton proved that the Yamabe flow has a global solution for every initial metric. The asymptotic behavior of the Yamabe flow was first studied by Chow \cite{Chow1992} where he proved that the flow approaches a metric of constant scalar curvature provided that the initial metric is locally conformally flat and has positive Ricci curvature.

In the non-compact case, this circle of idea was extended by Chen-Zhu \cite{ChenZhu2002} in order to regularize the metric so that vanishing curvature at infinity can be recognized through Hamilton-Chow's harnack inequality \cite{Chow1992}.  The major difficulty is to construct global solution Yamabe flow in a way that maximum principle is available. In \cite{ChenZhu2002}, it is based on constructing bounded curvature solution for a short-time, and studying the finite time singularity for bounded curvature solution. 

To overcome the absence of bounded curvature, we take an alternative way which is more local in nature. We use some idea in \cite{Ma2016} to construct global solution using potential theory, based on the "linear" nature of Yamabe flow. Unlike bounded curvature solution, the major challenge is to apply maximum principle to obtain geometric control from time-zero geometry. Furthermore although the Yamabe flow is relatively more linear than the Ricci flow, it does not behave analytically well in comparison type argument. In particular, strongly contrast with the Ricci flow, the distance $d_{g(t)}$ of the Yamabe flow $g(t)$ does not behave well, to the best of the author acknowledge. We mange to overcome this under the strong (uniform) asymptotic condition of $k(\cdot,r)$.
This is based on constructing uniformly conformal Yamabe flow where a-priori estimate of dirichlet conjugate heat kernel can be made available using idea of Bamler, Cabezas-Riva and Wilking \cite{BamlerCabezasWilking2019}. This allow us to construct Yamabe flow with instantaneously bounded non-negative Ricci curvature. This reduces the question back to bounded curvature setting so that the "relatively" standard theory can be employed.

The paper is organized as follows. In section 2, we will derive and recall some evolutions of curvature along locally conformally flat Yamabe flow. In section 3, we discuss the heat kernel estimate which will be used in section 4 to localize the curvature estimate. In section 5, we use argument of Ma to construct solution and will show that the solution has quantitative curvature estimate for all t. As an application, we show that the initial data is flat.

\vskip0.2cm

{\it Acknowledgement}: The authors would like to thank Pak-Yeung Chan for insightful discussion. The first-named author was supported by the National Science and Technology Council under grant No. 112-2115-M-002-004-MY3 and a direct grant of National Center for Theoretical Sciences. The second-named author was partially supported by Hong Kong RGC grant (Early Career Scheme) of Hong Kong No. 24304222 and No. 14300623, a direct grant of CUHK and a NSFC grant No. 12222122.

\section{Evolution equations along Yamabe flows}

We intend to regularize the given metric $g_0$ on $M$ using parabolic method. To make better use of the locally conformally flat nature, it is more sensitive to consider conformal perturbation. In this regard, we use the Yamabe flow which is the following evolution equation for metric: 
\begin{equation}
\left\{
\begin{array}{ll}
\partial_t g(t)=-\mathcal{R}(g(t)) g(t);\\[2mm]
g(0)=g_0
\end{array}
\right.
\end{equation}
where $\mathcal{R}(g)$ denotes the scalar curvature of $g$. In this section, we will collect some evolution equation along  Yamabe flow which will be used in this article. 

Similar to the Ricci flow, the Yamabe flow is favorable to (some) curvature positivity. For instances, the scalar curvature tends to remain non-negative generally, which can be seen by the following evolution equation.
\begin{lma}\label{lma:evo-R}
Along a Yamabe flow, the scalar curvature $\mathcal{R}$ of $g(t)$ satisfies 
\begin{equation}
\yheat \mathcal{R}=\mathcal{R}^2.
\end{equation}
\end{lma}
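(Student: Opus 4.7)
The plan is to derive this identity by specializing the standard first variation formula for scalar curvature to the Yamabe deformation. Recall that for a general one-parameter family of metrics with $\partial_t g_{ij} = h_{ij}$, the scalar curvature evolves as
$$\partial_t \mathcal{R} \;=\; -\Delta_{g(t)}(\tr_{g(t)} h) \;+\; \nabla^i\nabla^j h_{ij} \;-\; \langle h,\Ric\rangle_{g(t)}.$$
This is classical: it follows by differentiating $\mathcal{R}=g^{ij}R_{ij}$ in $t$, using the Palatini-type identity $\partial_t R_{ij} = \nabla_k \dot\Gamma^{k}_{ij} - \nabla_i \dot\Gamma^{k}_{jk}$, and simplifying the expression for $\dot\Gamma^{k}_{ij}$ in terms of $h$.

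The second step is to substitute $h_{ij}=-\mathcal{R}\,g_{ij}$ corresponding to the Yamabe flow and compute the three terms. The trace gives $\tr_{g} h = -n\mathcal{R}$, so the first term becomes $n\Delta_{g(t)}\mathcal{R}$. For the divergence term, since $\nabla g \equiv 0$, one finds $\nabla^j h_{ij} = -\nabla_i \mathcal{R}$ and hence $\nabla^i\nabla^j h_{ij} = -\Delta_{g(t)}\mathcal{R}$. The pointwise inner product contributes $\langle h,\Ric\rangle = -\mathcal{R}\,g^{ij}R_{ij} = -\mathcal{R}^2$. Collecting these terms,
$$\partial_t \mathcal{R} \;=\; n\Delta_{g(t)}\mathcal{R} - \Delta_{g(t)}\mathcal{R} + \mathcal{R}^2 \;=\; (n-1)\Delta_{g(t)}\mathcal{R} + \mathcal{R}^2,$$
which is exactly the claimed identity.

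There is no real obstacle here; the computation is routine once the first variation formula is in hand, and the only points requiring care are the signs and the convention $\Delta = \mathrm{div}\,\nabla$. An alternative route, worth mentioning but not pursuing in detail, is to exploit the fact that $g(t)$ remains pointwise conformal to $g_0$: writing $g(t) = u(t)\,g_0$ with $\partial_t u = -\mathcal{R}\,u$ and using the conformal transformation law for scalar curvature in dimension $n$ would reproduce the same evolution equation after a somewhat longer manipulation.
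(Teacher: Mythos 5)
Your computation is correct: the general first-variation formula
\[
\partial_t \mathcal{R} = -\Delta_{g(t)}(\operatorname{tr}_{g(t)} h) + \nabla^i\nabla^j h_{ij} - \langle h,\operatorname{Ric}\rangle_{g(t)}
\]
is right (one can sanity-check it with $h_{ij}=-2R_{ij}$, which recovers the Ricci-flow identity $\partial_t\mathcal{R}=\Delta\mathcal{R}+2|\operatorname{Ric}|^2$), and the substitution $h_{ij}=-\mathcal{R}g_{ij}$ gives $n\Delta\mathcal{R}-\Delta\mathcal{R}+\mathcal{R}^2=(n-1)\Delta\mathcal{R}+\mathcal{R}^2$ exactly as you wrote. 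The paper itself does not present an argument --- it simply cites Chow's Lemma~2.2 --- so there is no internal proof to compare against; your derivation from the linearization of scalar curvature is a clean, self-contained route, while the alternative you mention (using $g(t)=u^{4/(n-2)}g_0$ and the conformal transformation law for $\mathcal{R}$) is closer to the route naturally suggested by the conformal structure of the flow and is presumably what Chow's cited lemma does. Either way the result is the same and your write-up is correct.
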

\begin{proof}
This follows from \cite[Lemma 2.2]{Chow1992}. 
\end{proof}

When $g(t)$ is locally conformally flat, it was observed by Chow that the Ricci curvature also satisfies  a similar evolution equation. 
\begin{lma}\label{lma:evo-Ric}
Along a locally conformally flat Yamabe flow, the Ricci curvature $\Ric(g(t))$ of $g(t)$ satisfies 
\begin{equation}
\yheat R_{ij}=\frac1{n-2} B_{ij} 
\end{equation}
where 
\begin{equation}
B_{ij}=(n-1)|\Ric|^2 g_{ij}+n\mathcal{R} R_{ij} -n(n-1)R_{ik}R^k_j -\mathcal{R}^2 g_{ij}.
\end{equation}
In particular, if we denote $\lambda_1\leq...\leq \lambda_n$ to be the eigenvalues of $\Ric(g(t))$ with respect to $g(t)$, then 
\begin{equation}
B_{11}=\frac12 \sum_{j,k\neq 1} (\lambda_j-\lambda_k)^2 +(n-2) \sum_{j\neq 1} (\lambda_j-\lambda_1)\lambda_1.
\end{equation}
\end{lma}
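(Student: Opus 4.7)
The first identity is a combination of two ingredients. Since the Yamabe flow is a conformal variation $\partial_t g_{ij}=-\mathcal{R}\,g_{ij}$, the general first-variation formula for the Ricci tensor, specialized to $h_{ij}=f\,g_{ij}$, gives
$$\partial_t R_{ij}=\frac{n-2}{2}\,\nabla_i\nabla_j\mathcal{R}+\frac{1}{2}(\Delta\mathcal{R})\,g_{ij}.$$
For the other piece, the locally conformally flat assumption fixes the Riemann tensor as the Kulkarni--Nomizu product of the Schouten tensor $A_{ij}=\frac{1}{n-2}\bigl(R_{ij}-\frac{\mathcal{R}}{2(n-1)}g_{ij}\bigr)$ with the metric. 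Substituting this into $\Delta R_{ij}=g^{kl}\nabla_k\nabla_l R_{ij}$ via two applications of the twice-contracted second Bianchi identity $\nabla^k R_{jk}=\tfrac12\nabla_j\mathcal{R}$ yields an expression that is linear in $\nabla_i\nabla_j\mathcal{R}$ and $(\Delta\mathcal{R})g_{ij}$ modulo terms quadratic in $\Ric$. The coefficient $n-1$ in front of $\Delta$ is precisely the one needed to cancel the second derivatives of $\mathcal{R}$, and matching the remaining algebraic tensor against the explicit form of $B_{ij}$ finishes the first identity, as in Chow's computation \cite{Chow1992}.

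For the eigenvalue statement, I would evaluate $B_{11}$ at a fixed point $p$ in an orthonormal frame that diagonalizes $\Ric(g(t))$, with eigenvalues $\lambda_1\leq\cdots\leq\lambda_n$. Then $|\Ric|^2=\sum_i\lambda_i^2$, $\mathcal{R}=\sum_i\lambda_i$, and $R_{1k}R^k_1=\lambda_1^2$, so
$$B_{11}=(n-1)\sum_i\lambda_i^2+n\lambda_1\sum_i\lambda_i-n(n-1)\lambda_1^2-\Bigl(\sum_i\lambda_i\Bigr)^2.$$
Setting $S=\sum_{j\neq 1}\lambda_j$ and separating $\lambda_1$, the total $\lambda_1^2$ coefficient collapses to $-(n-1)(n-2)$, while the remaining terms regroup via the elementary identity
$$(n-1)\sum_{j\neq 1}\lambda_j^2-S^2=\tfrac12\sum_{j,k\neq 1}(\lambda_j-\lambda_k)^2.$$
The leftover cross term $(n-2)S\lambda_1-(n-1)(n-2)\lambda_1^2$ factorizes as $(n-2)\lambda_1\sum_{j\neq 1}(\lambda_j-\lambda_1)$, which is exactly the second sum in the claim.

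The main bookkeeping obstacle lies in the first step: producing $\Delta R_{ij}$ in the locally conformally flat setting requires carefully tracking the commutators $[\nabla_k,\nabla_l]$ acting on $R_{ij}$, whose contributions to the quadratic curvature terms have to be matched against those coming from the locally conformally flat expansion of $R_{ijkl}$. The cleanest organization is to keep everything in Schouten form until the very last step, substituting back to $\Ric$ and $\mathcal{R}$ only after verifying that the derivative terms cancel exactly; otherwise the intermediate algebra quickly becomes unwieldy. The second assertion, by contrast, is a purely linear-algebra identity whose verification is essentially immediate once the frame is fixed.
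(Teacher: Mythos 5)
Your proposal is correct. The paper's own proof of this lemma is a one-line citation to Chow's Lemma 2.4, and what you sketch is precisely Chow's computation: the conformal first variation of Ricci, the Kulkarni--Nomizu decomposition of $\Rm$ in the locally conformally flat case (equivalently, for $n\geq 4$, the vanishing Cotton tensor identity $\nabla_k R_{ij}-\nabla_j R_{ik}=\tfrac{1}{2(n-1)}(g_{ij}\nabla_k\mathcal{R}-g_{ik}\nabla_j\mathcal{R})$) to rewrite $\Delta R_{ij}$, cancellation of the $\nabla^2\mathcal{R}$ and $(\Delta\mathcal{R})g$ terms against $(n-1)\Delta R_{ij}$, and the eigenvalue algebra, which you carry out correctly and in full.
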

\begin{proof}
This follows from \cite[Lemma 2.4]{Chow1992}. 
\end{proof}

When $M$ is compact, it follows directly from Lemma~\ref{lma:evo-Ric} and  Hamilton's tensor maximum principle that $\Ric\geq 0$ is preserved along the locally conformally flat Yamabe flow. In the non-compact case, the maximum principle is more involved and we need the following evolution equation for the Ricci curvature lower bound. For each $(x,t)\in M\times [0,T]$, we define
\begin{equation}
\ell(x,t):= \inf\{s>0: \Ric(x,t)+s g(x,t)\geq 0\}.
\end{equation}

\begin{lma}\label{lma:evo-Ric-new}
Along a locally conformally flat Yamabe flow, the function $\ell$ satisfies 
\begin{equation}
\yheat \ell \leq 2\mathcal{R}\ell+n \ell^2.
\end{equation}
in the sense of barrier.
\end{lma}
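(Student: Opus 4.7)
The plan is to interpret the inequality at each spacetime point $(x_0,t_0)$ by constructing a smooth lower barrier $\phi$ with $\phi(x_0,t_0)=\ell(x_0,t_0)$, $\phi\le\ell$ in a spacetime neighborhood, and $\yheat\phi\le 2\mathcal{R}\ell+n\ell^2$ at the point. When $\ell(x_0,t_0)=0$ the trivial barrier $\phi\equiv 0$ works, so I focus on the case $\ell(x_0,t_0)>0$, i.e.\ the smallest eigenvalue $\lambda_1(x_0,t_0)=-\ell(x_0,t_0)<0$, with a unit eigenvector $v_0$. The barrier is built from $v_0$ and the inequality is then read off from Lemma~\ref{lma:evo-Ric}.

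\textbf{Barrier construction.} Since the Yamabe flow is conformal, I write $g(t)=e^{2u(\cdot,t)}g(t_0)$ locally around $x_0$, so that $u(\cdot,t_0)\equiv 0$ and $\partial_t u(\cdot,t_0)=-\mathcal{R}/2$. Let $w$ be the $g(t_0)$-parallel extension of $v_0$ along radial geodesics emanating from $x_0$, which is $g(t_0)$-unit with $\nabla w(x_0)=0$, and set $v(x,t):=e^{-u(x,t)}w(x)$. This achieves $|v|_{g(t)}\equiv 1$ near $(x_0,t_0)$, $\nabla v(x_0,t_0)=0$, and $\partial_t v^j|_{(x_0,t_0)}=\tfrac{1}{2}\mathcal{R}\,v_0^j$. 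I define
\begin{equation*}
\phi(x,t):=-R_{ij}(x,t)\,v^i(x,t)\,v^j(x,t).
\end{equation*}
Since $v(x,t)$ is a $g(t)$-unit vector, the Rayleigh quotient satisfies $R_{ij}v^iv^j\ge\lambda_1(x,t)\ge -\ell(x,t)$, so $\phi\le\ell$ with equality at $(x_0,t_0)$.

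\textbf{Computation of $\yheat\phi$ at $(x_0,t_0)$.} Differentiating $\phi$ and using $\partial_t v^j=\tfrac{1}{2}\mathcal{R}\,v^j$ together with Lemma~\ref{lma:evo-Ric},
\begin{equation*}
\partial_t\phi\big|_{(x_0,t_0)}=-(\partial_t R_{ij})v_0^iv_0^j-\mathcal{R}\lambda_1=-(n-1)(\Delta\Ric)(v_0,v_0)-\tfrac{1}{n-2}B_{11}-\mathcal{R}\lambda_1.
\end{equation*}
For the spatial Laplacian, expanding $\Delta(R_{ij}v^iv^j)$ and using $\nabla v(x_0,t_0)=0$ to kill the cross terms leaves
\begin{equation*}
\Delta\phi\big|_{(x_0,t_0)}=-(\Delta\Ric)(v_0,v_0)-2R_{ij}v_0^i(\Delta v)^j.
\end{equation*}
The key observation is that twice differentiating $|v|_{g(t)}^2\equiv 1$ yields $\langle v_0,\Delta v\rangle_{g(t_0)}=0$ at $(x_0,t_0)$, so since $R_{ij}v_0^i=\lambda_1 v_0^j$ the residual term $R_{ij}v_0^i(\Delta v)^j=\lambda_1\langle v_0,\Delta v\rangle$ vanishes. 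Subtracting then gives
\begin{equation*}
\yheat\phi\big|_{(x_0,t_0)}=-\tfrac{1}{n-2}B_{11}-\mathcal{R}\lambda_1.
\end{equation*}

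\textbf{Algebraic finish.} Substituting $\lambda_1=-\ell$ and $\sum_{j\neq 1}\lambda_j=\mathcal{R}-\lambda_1$ into the formula for $B_{11}$ from Lemma~\ref{lma:evo-Ric} yields
\begin{equation*}
\tfrac{1}{n-2}B_{11}=\tfrac{1}{2(n-2)}\sum_{j,k\neq 1}(\lambda_j-\lambda_k)^2-\mathcal{R}\ell-n\ell^2,
\end{equation*}
and discarding the non-negative sum gives $\yheat\phi|_{(x_0,t_0)}\le 2\mathcal{R}\ell+n\ell^2$, as claimed. The main obstacle I anticipate is arranging the barrier so that no stray curvature terms appear in $\Delta\phi$: the conformal extension $v=e^{-u}w$, combined with $\nabla w(x_0,t_0)=0$, is precisely what forces $\langle v_0,\Delta v\rangle=0$ and matches the scalar Laplacian of $-R_{ij}v^iv^j$ to the rough-Laplacian expression $-(\Delta\Ric)(v_0,v_0)$ that feeds into Lemma~\ref{lma:evo-Ric}.
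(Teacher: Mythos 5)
Your proof is correct and is essentially the same argument as the paper's: both build the barrier by spatially radially parallel--transporting the eigendirection $v_0$ at time $t_0$ and then evolving it so as to keep it unit (the paper writes this as the ODE $\partial_t E_1=\tfrac12\mathcal{R}E_1$, you realize it conformally as $v=e^{-u}w$, which is the same extension), and both reduce the evolution inequality to the sign of $\tfrac1{n-2}B_{11}+\mathcal{R}\lambda_1$. Your finish is slightly more streamlined in two respects: you invoke the already-simplified eigenvalue expression for $B_{11}$ from Lemma~\ref{lma:evo-Ric} rather than re-deriving the estimate via Cauchy--Schwarz, and you handle the spatial Laplacian by noting $\langle v_0,\Delta v\rangle=0$ (from $|v|_{g(t)}^2\equiv1$ and $\nabla v(x_0,t_0)=0$) together with $R_{ij}v_0^j=\lambda_1 v_0^i$, which is a bit cleaner than asserting $\Delta E_1=0$ outright. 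One small remark: your orientation $\phi\le\ell$ with equality at the point is the correct one for a lower barrier; the paper's displayed ``$\ell\le\varphi$'' appears to have the inequality reversed, since the Rayleigh quotient gives $-\Ric(E_1,E_1)\le -\lambda_1\le\ell$.
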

\begin{proof}
At $(x_0,t_0)$, we might assume the smallest eigenvalues of $\Ric(x_0,t_0)$ is negative as otherwise $\ell\equiv 0$ locally around $(x_0,t_0)$ so that the conclusion holds trivially. Thus, we might assume $\Ric +\ell g \in \mathrm{C}_{\Ric\geq 0}$. By compactness of the sphere bundle, we may find an orthonormal frame $\{e_i\}_{i=1}^n$ so that it diagonalize $\Ric(g(t))$ with respect to $g(t)$. If we denote its eigenvalues as $\lambda_1\leq ...\leq \lambda_n$, then $\Ric|_{(x_0,t_0)}(e_1,e_1)=-\ell(x_0,t_0)$. We use Uhlenbeck trick to simplify the computation as follow: we extend $\{e_i\}_{i=1}^n$ around $x_0$ at $t=t_0$ to $\{E_i(t_0)\}_{i=1}^n$ by parallel transport with respect to $g(t_0)$ so that $\nabla E_i=\Delta E_i=0$ at $(x_0,t_0)$. We then use the ODE: $\partial_t E_i=\frac12\mathcal{R}E_i$ to extend it to $t\approx t_0$ so that $g(E_i,E_j)=\delta_{ij}$ locally around $(x_0,t_0)$. By assumption, $\varphi(x_0,t_0)=\ell(x_0,t_0)$ and $\ell\leq \varphi$ locally near $(x_0,t_0)$. Moreover, by Lemma~\ref{lma:evo-Ric} and evolution of the frame, the function $\varphi(x,t):=-\Ric(E_1,E_1)$ satisfies 
\begin{equation}\label{eqn:bundle-Ric}
\begin{split}
&\quad \yheat \Big|_{(x_0,t_0)}\varphi\\
&=-\frac1{n-2} \left[ (n-1)|\Ric|^2 +2(n-1)\mathcal{R} \lambda_1 -n(n-1)\lambda_1^2 -\mathcal{R}^2\right].
\end{split}
\end{equation}

We now further analyze the quadratic term more carefully. For convenience, we denote $\sum_{i=2}^n \lambda_i=\mu$ so that 
\begin{equation}
\begin{split}
&\quad (n-1)|\Ric|^2 +2(n-1)\mathcal{R} \lambda_1 -n(n-1)\lambda_1^2 -\mathcal{R}^2\\
&=(n-1)\sum_{i=1}^n \lambda_i^2-\left(\sum_{i=1}^n \lambda_i \right)^2+2(n-1)\left(\sum_{i=1}^n \lambda_i \right) \lambda_1 -n(n-1)\lambda_1^2 \\
&\geq \mu^2 +(n-1)\lambda_1^2-(\mu^2+2\mu \lambda_1 +\lambda_1^2)+2(n-1)(\mu+\lambda_1)\lambda_1 -n(n-1)\lambda_1^2\\
&=2(n-2)\mu\lambda-(n-2)^2 \lambda_1^2\\
&=(n-2)\left(-n\varphi^2-2 \mathcal{R} \varphi \right)
\end{split}
\end{equation}
Putting this back to \eqref{eqn:bundle-Ric}, this completes the proof.
\end{proof}

\section{Heat kernel estimates}

In this section, we consider the  heat kernel along the Yamabe flow and discuss some estimates. Given a complete solution to the Yamabe flow $g(t),t\in [0,T]$ on a smooth manifold $M$. Let $\Omega$ be a open set with smooth boundary. We consider the following heat kernel with dirichlet boundary condition:
\begin{equation}
\left\{
\begin{array}{ll}
\displaystyle \left(\frac{\partial}{\partial t}-(n-1)\Delta_{g(t)}-\frac{n}2 \mathcal{R}_{g(x,t)}\right) K_{\Omega}(x,t;y,s)=0,\;\;\text{for}\; x\in \Omega;\\[4mm]
\displaystyle\lim_{t\to s^+}K_\Omega(x,t;y,s)=\delta_y(x),\;\;\text{for}\; x\in \Omega;\\[4mm]
K_\Omega(x,t;y,s)=0,\;\;\text{for}\; x\in\partial\Omega,
\end{array}
\right.
\end{equation}
for $y\in \Omega$ and $t\in (s,T]$. Its conjugate is given by 

\begin{equation}
\left\{
\begin{array}{ll}
\displaystyle \left(\frac{\partial}{\partial s}+(n-1)\Delta_{g(s)}\right) K_{\Omega}(x,t;y,s)=0,\;\;\text{for}\; y\in \Omega;\\[4mm]
\displaystyle\lim_{s\to t^-}K_\Omega(x,t;y,s)=\delta_x(y),\;\;\text{for}\; y\in \Omega;\\[4mm]
K_\Omega(x,t;y,s)=0,\;\;\text{for}\; y\in\partial\Omega,
\end{array}
\right.
\end{equation}
for $x\in \Omega$ and $s\in [0,t)$. Particularly for $0\leq s<t<T$, we have
\begin{equation}\label{eqn:heat-mono}
\int_\Omega K_\Omega(\cdot,t;y,s)\,d\mathrm{vol}_{g(t)}\leq 1.
\end{equation}
This will play crucial role in obtaining estimates.
\medskip

Following the idea in \cite{ChanLee2023}, we intend to use heat kernel to control the curvature along the Yamabe flow, whenever it exists. To achieve this, we need to control the heat kernel $K_\Omega$ above, under scaling invariant decay $|\Rm(g(t))|\leq \a t^{-1}$. Unlike the Ricci flow, it is unclear to the authors how the distance function $d_{g(t)}$ should behave in general, under solely $|\Rm(g(t))|\leq \a t^{-1}$. In the following, we observe an estimate of $K_\Omega$ under Yamabe flow which is in addition uniformly conformal. This will be sufficient for our application.

\begin{prop}\label{prop:heatkernelestimate}
    For any $\a>1$, there exists $C=C(n,\a)>0$ such that the following holds. Suppose  $(M,g(t))_{t\in[0,1]}$ is a smooth solution to the Yamabe flow so that 
    \begin{equation}
        |{\Rm}(g(t))|\leq\frac{\a}{t}\text{ and }\a^{-1}g(0)\leq g(t)\leq \a g(0),
    \end{equation}
for all $(x,t)\in M\times(0,T]$. If $x_0\in M$ is a point such that  $B_{g(t)}(x_0,4)\Subset M$  for all $t\in[0,T]$ and $\Omega\Subset M$ is a  pre-compact domain with a smooth boundary so that $\Omega\Subset B_{g(t)}(x_0,1)$ for all $t\in[0, T]$. Then the heat kernel $K_\Omega$ satisfies
    \begin{equation}
        K_{\Omega}(x,t;y,s)\leq\frac{C}{\mathrm{Vol}_{g_0}\left(B_{g_0}(x,\sqrt{t-s}) \right)}\cdot \exp\left(-\frac{d_{g_0}(x,y)^2}{C(t-s)} \right)
    \end{equation}
    for all $0\leq s< t\leq T$ and $x,y\in\Omega$.
\end{prop}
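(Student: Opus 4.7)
The plan is to prove the Gaussian heat kernel bound following the strategy of Bamler-Cabezas-Rivas-Wilking, adapted from Ricci flow to the Yamabe flow setting. The starting point throughout is the $L^1$ mass bound \eqref{eqn:heat-mono}, which plays the role of a universal normalization for $K_\Omega(\cdot, t; y, s)$. The argument has three stages: (i) a uniform Sobolev inequality on $(M, g(t))$ at parabolic scales, (ii) an on-diagonal bound via a Nash-type iteration, and (iii) off-diagonal Gaussian decay via Davies' exponential perturbation.

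For (i), I would first exploit the curvature bound $|\Rm(g(t))| \leq \a/t$, which gives $\Ric(g(t)) \geq -(n-1)\a t^{-1} g(t)$. Bishop-Gromov applied at each fixed time $t$ then yields volume doubling of $g(t)$-balls up to the natural parabolic scale $\sqrt{t}$, and the uniform conformal equivalence $\a^{-1}g_0 \leq g(t) \leq \a g_0$ transfers the doubling and the associated $L^2$-Sobolev inequality to $g_0$-balls, with constants depending only on $n$ and $\a$.

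For (ii), differentiating $F(t) := \int_\Omega K^2\, d\mathrm{vol}_{g(t)}$ using the equation for $K$ together with $\partial_t\, d\mathrm{vol}_{g(t)} = -\tfrac{n}{2}\mathcal{R}\, d\mathrm{vol}_{g(t)}$ gives
\begin{equation*}
F'(t) = -2(n-1)\int |\nabla K|_{g(t)}^2\, d\mathrm{vol}_{g(t)} + \frac{n}{2}\int \mathcal{R}\, K^2\, d\mathrm{vol}_{g(t)}.
\end{equation*}
Combining Sobolev with $\|K\|_{L^1}\leq 1$ yields the Nash inequality $\int|\nabla K|^2 \geq c F^{1+2/n}$, while $|\mathcal{R}|\leq C/t$ controls the residual, producing the Bernoulli-type differential inequality $F'\leq -cF^{1+2/n} + (C/t)F$. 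The crucial trick is the substitution $u=F^{-2/n}$, which linearizes this to $u' + \tfrac{2C}{nt}u \geq \tfrac{2c}{n}$; integrating with factor $t^{2C/n}$ and using the elementary inequality $1-(s/t)^p \geq 1-s/t$ for $p\geq 1$ yields $u(t)\geq c'(t-s)$ uniformly in $0\leq s<t$, and hence $F(t)\leq C(t-s)^{-n/2}$. The pointwise $L^\infty$ on-diagonal bound, with denominator $\mathrm{Vol}_{g_0}(B_{g_0}(x,\sqrt{t-s}))$ coming from the volume doubling in (i), then follows by Cauchy-Schwarz on the semigroup identity $K(x,t;y,s) = \int K(x,t;z,\tau)\, K(z,\tau;y,s)\, d\mathrm{vol}_{g(\tau)}$ combined with the analogous $L^2$ estimate on the backward kernel.

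For (iii), I would apply Davies' exponential perturbation trick: running the same $L^2$ analysis on $e^{\lambda f(x)}K(x,t;y,s)$ for a smooth $f$ with $|\nabla f|_{g_0}\leq 1$ produces an additional term proportional to $\lambda^2$ in the Nash inequality; optimizing $\lambda\sim d_{g_0}(x,y)/(t-s)$ yields the Gaussian exponential factor, with the uniform equivalence ensuring that $|\nabla f|_{g(t)}$ and $d_{g(t)}$ are controlled by their $g_0$-counterparts up to the constant $\a$. The main obstacle is the residual $\tfrac{n}{2}\int \mathcal{R}K^2$ term in (ii), which does not trivially cancel against the volume form derivative; a naive integrating factor applied directly to $F$ produces a multiplicative $(t/s)^C$ error that diverges as $s\to 0$, incompatible with the proposition's range $0\leq s<t$. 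The Bernoulli substitution is essential for removing this apparent obstruction and obtaining the bound uniformly up to $s=0$.
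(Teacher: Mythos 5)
Your outline takes the classical Nash--Davies route (functional iteration on $F(t)=\int K^2$, Bernoulli substitution, exponential perturbation), whereas the paper defers to \cite{ChanLee2023} and \cite{BamlerCabezasWilking2019}, whose mechanism is different: the on-diagonal bound there comes from a parabolic mean value inequality fed by the $L^1$ normalization \eqref{eqn:heat-mono}, and the Gaussian tail from an integrated weight comparison. Both routes use the same two raw ingredients ($\int K\,d\mathrm{vol}\le 1$ and $|\Rm(g(t))|\le\alpha/t$), and both replace Perelman's distance distortion estimate---unavailable for Yamabe flow---by the uniform metric equivalence $\alpha^{-1}g_0\le g(t)\le\alpha g_0$, as the paper does. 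So the approaches are philosophically parallel but technically distinct.

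There is, however, a genuine gap in your step (ii). You infer from $\Ric(g(t))\ge -(n-1)\alpha t^{-1}$ a volume-doubling estimate and then assert a Nash inequality $\int|\nabla K|^2\ge c\,F^{1+2/n}$ with $c=c(n,\alpha)$, concluding $F(t)\le C(t-s)^{-n/2}$. But volume doubling alone does not give a Nash or $L^2$-Sobolev inequality with a \emph{uniform} constant: under a Ricci lower bound at scale $r$, the relative Faber--Krahn/Nash constant on $B(x,r)$ scales like $r^2\,\mathrm{Vol}(B(x,r))^{-2/n}$ and degenerates if the geometry collapses. The proposition's hypotheses contain no injectivity radius or volume lower bound, so collapsing is allowed---which is precisely why the conclusion carries the factor $\mathrm{Vol}_{g_0}(B_{g_0}(x,\sqrt{t-s}))^{-1}$ rather than $(t-s)^{-n/2}$. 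A correct Nash iteration would therefore not produce $F(t)\le C(t-s)^{-n/2}$ with $C=C(n,\alpha)$; and the subsequent Cauchy--Schwarz step does not convert a $(t-s)^{-n/2}$ bound into a $1/\mathrm{Vol}$ bound, since doubling only relates volumes at comparable radii, not to $r^n$. (In \cite{BamlerCabezasWilking2019} the injectivity radius hypothesis makes the two forms equivalent; the present statement drops that hypothesis and pays for it with the volume prefactor.) To salvage your route you would need to run the iteration with the volume-weighted Faber--Krahn inequality, propagate the volume factor through the ODE for $F$, and use the metric equivalence to freeze that factor at the $g_0$-scale---a non-cosmetic change, and exactly the point where the mean value inequality argument of \cite{BamlerCabezasWilking2019,ChanLee2023} is cleaner, because there the volume factor is produced directly by plugging the $L^1$ bound into the mean value estimate.
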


When $g(t)$ is a solution to the Ricci flow, the corresponding heat kernel estimate was established by  \cite{BamlerCabezasWilking2019} in the complete case, under $|\Rm(g(t))|\leq \a t^{-1}$ and $\mathrm{inj}(g(t))\geq \sqrt{\a^{-1}t}$ for some $\a>0$. It was later observed by the second named author and Tam \cite{LeeTam2022} that the same result also holds in local sense, see also \cite{ChanLee2023}. 
%
%
\begin{proof}[Proof of Proposition~\ref{prop:heatkernelestimate}]
The proof is almost identical to that of \cite[Proposition 2.1]{ChanLee2023} which in turn is based on the argument in \cite[Proposition 3.1]{BamlerCabezasWilking2019} using \eqref{eqn:heat-mono}. We use the metric equivalence instead to take care of the distance distortion in Ricci flow.
\end{proof}

\section{A-priori local estimates}

In this section, we will localize curvature estimates for locally conformally flat Yamabe flow with scaling invariant control. The following localized maximum principle is the core tool, which will be used repeatedly in this work.

\begin{lma}\label{lma:localMP}
Suppose $g_0$ is a metric on $M$ and $x_0\in M$ is a point such that 
\begin{enumerate}
\item $B_{g_0}(x_0,r)\Subset M$;
\item $\Ric(g_0)\geq -(n-1)r^{-2}$.
\end{enumerate}
Let $g(t)$ be a smooth solution to the Yamabe flow on $M\times [0,Tr^2]$ (not necessarily complete) with $g(0)=g_0$ such that 
\begin{enumerate}
\item[(i)] $|\Rm(g(t))|\leq \a t^{-1}$;
\item[(ii)] $\mathcal{R}(g(t))\geq -n(n-1)r^{-2}$;
\item[(iii)] $\a^{-1}g_0\leq g(t)\leq \a g_0$
\end{enumerate}
on $M\times (0,Tr^2]$ for some $\a>1$ and $r>0$. If $\ell(x,t)$ is a non-negative continuous function on $M\times [0,Tr^2]$ such that $\ell(0)\leq r^{-2}$, $\ell(x,t)\leq \a t^{-1}$ on $M\times (0,T]$ and satisfies 
\begin{equation}
\yheat \ell\leq \b\mathcal{R}\ell +K\ell^2
\end{equation}
in the distribution sense, for some $K\geq 0$ and $0\leq \b<\frac{n}2$.  Then there is $\Lambda(\a,\b,K,n)>0$ such that for all $t\in (0,Tr^2]$, $\ell(x_0,t)\leq \Lambda r^{-2}$.
\end{lma}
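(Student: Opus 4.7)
The plan is to pair $\ell$ with the Dirichlet conjugate heat kernel from Proposition~\ref{prop:heatkernelestimate} and close up the resulting integral inequality via a Gronwall-type bootstrap. By the scaling invariance of the hypotheses and the conclusion, I may assume $r=1$, so the claim becomes $\ell(x_0,t)\le\Lambda(n,\alpha,\beta,K)$ on $(0,T]$ with $\Lambda$ independent of $T$.

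The first move is to rewrite the evolution inequality in terms of the natural Yamabe-flow heat operator $\widetilde L:=\partial_s-(n-1)\Delta_{g(s)}-\tfrac{n}{2}\mathcal{R}_{g(s)}$, which is exactly the operator whose Dirichlet kernel is controlled by Proposition~\ref{prop:heatkernelestimate}. The hypothesis rearranges to $\widetilde L\ell\le(\beta-\tfrac{n}{2})\mathcal{R}\ell+K\ell^2$, and since $\beta<\tfrac{n}{2}$ the coefficient of the unbounded-in-time scalar curvature is negative, so combining with the time-independent lower bound $\mathcal{R}\ge -n(n-1)$ from assumption (ii) yields the clean bound
\[
\widetilde L\ell\le C_0\ell+K\ell^2,\qquad C_0:=(\tfrac{n}{2}-\beta)n(n-1).
\]

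Next fix $t_0\in(0,T]$ and take $\Omega=B_{g_0}(x_0,\rho)$ with $\rho=\rho(\alpha)>0$ small enough that $\Omega\Subset B_{g(t)}(x_0,1)$ and $B_{g(t)}(x_0,4)\Subset M$ for all $t\in[0,T]$; this is possible thanks to the uniform conformal equivalence (iii) (after a fixed rescaling absorbed into $\rho$). With $p(y,s):=K_\Omega(x_0,t_0;y,s)$ and
\[
h(s):=\int_\Omega \ell(y,s)\,p(y,s)\,d\mathrm{vol}_{g(s)},
\]
one has $\lim_{s\to t_0^-}h(s)=\ell(x_0,t_0)$ and $h(0)\le 1$ by $\ell(\cdot,0)\le 1$ and \eqref{eqn:heat-mono}. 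Using $\partial_s p=-(n-1)\Delta p$, $\partial_s d\mathrm{vol}_{g(s)}=-\tfrac{n}{2}\mathcal{R}d\mathrm{vol}_{g(s)}$, and integration by parts (with $p|_{\partial\Omega}=0$) gives
\[
\frac{dh}{ds}=\int_\Omega p\,\widetilde L\ell\,d\mathrm{vol}_{g(s)}-(n-1)\oint_{\partial\Omega}\ell\,\partial_\nu p\,d\sigma_{g(s)},
\]
where the boundary integrand carries the sign $-\partial_\nu p\ge 0$. Inserting the bound on $\widetilde L\ell$ from the previous paragraph and controlling the boundary flux by the Gaussian decay of $K_\Omega$ from Proposition~\ref{prop:heatkernelestimate} produces a Gronwall-type inequality for $h$ which, together with $h(0)\le 1$, gives the desired $\ell(x_0,t_0)\le\Lambda$.

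The main obstacle is the singular factor $s^{-1}$ that appears when one bounds $K\ell^2\le K\alpha\ell/s$ using $\ell\le\alpha/s$: a direct Gronwall diverges logarithmically near $s=0$. To handle this I would split the time interval into $[0,t_*]$ and $[t_*,t_0]$ with $t_*=t_*(n,\alpha,\beta,K)$ a small threshold. On $[0,t_*]$ the crude bound $\ell\le\alpha/s$ is replaced by a bootstrapped $L^\infty$ bound $\ell\le\Lambda_*$ on $\Omega\times[0,t_*]$ coming from continuity of $\ell$ and $\ell(\cdot,0)\le 1$; this linearizes the inequality into $\widetilde L\ell\le(C_0+K\Lambda_*)\ell$, on which Gronwall closes cleanly. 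On $[t_*,t_0]$ the crude bound is used directly with $\alpha/s\le\alpha/t_*$ now a constant, while for $t\ge\alpha/\Lambda$ the hypothesis $\ell\le\alpha/t$ already gives the conclusion. A fixed-point/consistency check fixes the final $\Lambda=\Lambda(n,\alpha,\beta,K)$.
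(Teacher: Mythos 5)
Your high-level plan—pair $\ell$ with the Dirichlet conjugate kernel from Proposition~\ref{prop:heatkernelestimate}, exploit the sign of $\beta-\tfrac{n}{2}$ against $\mathcal{R}\ge -n(n-1)r^{-2}$, then Gronwall—captures one genuine ingredient of the paper's argument, but two of the central difficulties are left unresolved, and a third step is circular.

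First, the boundary flux. With $h(s)=\int_\Omega \ell\,p\,d\mathrm{vol}_{g(s)}$ and $p(\cdot,s)=K_\Omega(x_0,t_0;\cdot,s)$, integration by parts produces
\[
\frac{dh}{ds}=\int_\Omega p\,\widetilde L\ell\,d\mathrm{vol}_{g(s)}-(n-1)\oint_{\partial\Omega}\ell\,\partial_\nu p\,d\sigma_{g(s)},
\]
and since $p\ge 0$ in $\Omega$ with $p=0$ on $\partial\Omega$ we have $\partial_\nu p\le 0$, so $-(n-1)\oint\ell\,\partial_\nu p\ge 0$: this term acts as an uncontrolled positive source. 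Proposition~\ref{prop:heatkernelestimate} gives a pointwise Gaussian upper bound on $K_\Omega$, not on $|\nabla K_\Omega|$; there is no gradient estimate available to bound $\oint_{\partial\Omega}|\partial_\nu p|$, and dimensional analysis shows $\oint|\partial_\nu p|\sim s^{-1}$ with only a constant-order Gaussian factor once $t_0\sim \rho^2$, so the flux term is genuinely not integrable against the $\ell\le\alpha/s$ bound on $\partial\Omega$. The paper sidesteps this entirely: it never pairs bare $\ell$ with the kernel, but instead constructs a cutoff $G=\phi(d_{\tilde g_0}(\cdot,x_1))u^\gamma+L\,\phi\,F^{1+\e}$ that vanishes identically on $\partial\Omega$, so the representation formula has no boundary flux. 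That in turn forces one to control $\Delta_{g(t)}\phi$ in the moving metric, and the factor $u^\gamma$ (with $\gamma=1+3/(n-1)$) is inserted precisely so that the $|\nabla\log u|$ terms coming from the conformal Laplacian comparison can be absorbed—this is the reason the cutoff is not simply $\phi$. Your proposal has no analogue of any of this.

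Second, the quadratic term and the time split. Linearizing $K\ell^2$ with the a priori bound $\ell\le\alpha/s$ gives the non-integrable $K\alpha/s$, as you note, but your proposed fix is circular: a $t_*$-scale bound $\ell\le\Lambda_*$ on $\Omega\times[0,t_*]$ with $t_*=t_*(n,\alpha,\beta,K)$ cannot come from ``continuity and $\ell(\cdot,0)\le 1$,'' because continuity only yields a $t_*$ depending on $\ell$ itself. Moreover, even granting such a bound, the kernel pairing controls $\ell$ only at the single point $(x_0,t_0)$, not on $\Omega\times[0,t_*]$, so there is no mechanism to propagate it. The paper's resolution is a point-picking/contradiction argument with parabolic rescaling: at the first violation $(x_1,t_1)$ of $\ell\le\Lambda\rho^{-2}$ one rescales by $r_1=\tfrac12\rho(x_1)$, and then the constraint $\tilde\ell(x_1,\tilde t_1)=\tfrac14\Lambda\le\alpha/\tilde t_1$ forces $\tilde t_1\le 4\alpha/\Lambda$; this automatic smallness of the rescaled time is exactly what makes the final inequality in $\Lambda$ close. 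The quadratic term itself is handled cleanly by replacing $\tilde\ell$ with $F=e^{-K\Lambda t}\tilde\ell$, using the contradiction-hypothesis bound $\tilde\ell\le\Lambda$, so $K\ell^2\le K\Lambda\ell$ is swallowed by the exponential weight. There is no analogue of either the rescaling or the exponential weight in your sketch, and without them the Gronwall estimate does not yield a bound independent of $t_0$.

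A smaller point: eq.~\eqref{eqn:heat-mono} integrates $K_\Omega(\cdot,t;y,s)$ over the \emph{first} variable at time $t$; it does not directly give $\int_\Omega K_\Omega(x_0,t_0;y,0)\,d\mathrm{vol}_{g(0)}(y)\le 1$. That integral is bounded by a constant $C(n,\alpha)$ via the Gaussian upper bound and volume comparison, not by $1$; so $h(0)\le C$, not $h(0)\le 1$.

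What you did get right and differently from the paper: using $\mathcal{R}\ge -n(n-1)r^{-2}$ together with $\beta<\tfrac{n}{2}$ to absorb the $\beta\mathcal{R}\ell$ term is a valid, slightly more elementary alternative to the paper's choice of exponent $\e=\tfrac{n}{2\beta}-1$ (which makes $(1+\e)\beta=\tfrac{n}{2}$ and cancels the kernel's $\tfrac{n}{2}\mathcal{R}$ term exactly without invoking the scalar curvature lower bound). But in the paper that same exponent does double duty, producing the favorable gradient term $-\e(n-1)(1+\e)F^{\e-1}|\nabla F|^2$ needed to absorb cross terms from the cutoff—another reason the cutoff-free scheme does not reassemble into the paper's proof.
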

\begin{proof}We follow the argument in \cite{LeeTam2022}. By scaling, we assume $r=1$. Without loss of generality, we might assume $T\leq 1$ since $\ell\leq \a t^{-1}$. We will use $C_i$ to denote any constant depending only on $\a,\b,K,n$. For $x\in B_{g_0}(x_0,1)$, we let $$\rho(x):=\sup\{s\in (0,1): B_{g_0}(x,s)\Subset B_{g_0}(x_0,1) \}.$$ 

Let $\Lambda>0$ be a large constant to be chosen. If $\ell< \Lambda \rho^{-2}$ for all $(x,t)\in B_{g_0}(x_0,1)\times [0,T]$, then we are done. Otherwise, there is $t_1\in (0,T)$ such that $\ell< \Lambda \rho^{-2}$ for all $(x,t)\in B_{g_0}(x_0,1)\times [0,t_1]$, and for some $x_1\in B_{g_0}(x_0,1)$, we have $\ell(x_1,t_1)=\Lambda \rho^{-2}(x_1)$. Denote $r_1=\frac12 \rho(x_1)$. We consider $B_{g_0}(x_1,r_1)\times [0,t_1]$ where $\ell\leq \Lambda \rho^{-2}\leq \Lambda r_1^{-2}$. 

Consider the parabolic rescaling of metrics and $\ell$: $\tilde g(t)=r_1^{-2}g(r_1^2t)$ and $\tilde\ell(t)=r_1^2\ell(r_1^2t)$ on $B_{\tilde g_0}(x_1,1)\times [0,\tilde t_1]$ where $\tilde t_1=r_1^{-2}t_1$. We note that the assumptions are invariant under the parabolic rescaling. Moreover, in the tile picture, we have $\tilde\ell(0)\leq 1$, $\tilde\ell\leq \Lambda$ on $B_{\tilde g_0}(x_1,1)\times [0,\tilde t_1]$ and $\tilde \ell(x_1,\tilde t_1)=\frac14 \Lambda$. 

We construct cut-off function as follows. Fix a smooth non-increasing function $\phi:[0,+\infty)\to[0,1]$ such that $\phi\equiv 1$ on $[0,\frac14]$, vanishes outside $[0,1]$ and satisfies $|\phi'|^2\leq 10^3\phi,\phi''\geq -10^3\phi$. We write the Yamabe flow using its conformal form, $\tilde g(t)=u^\frac4{n-2} \tilde g_0$  and will use the function 
\begin{equation}\label{eqn:cut-ff}
\Phi(x,t)=\phi\left(d_{\tilde g_0}(x,x_1)\right)\cdot u^\gamma
\end{equation} 
where $\gamma=1+3(n-1)^{-1}$, so that 
\begin{equation}
\begin{split}
\left(\frac{\partial}{\partial t}-(n-1)\Delta_{\tilde g(t)} \right) \Phi&=-(n-1) u^\gamma\cdot \Delta_{\tilde g(t)} \phi  \\
&\quad +\phi  \cdot \left(\frac{\partial}{\partial t}-(n-1)\Delta_{\tilde g(t)} \right)  u^\gamma\\
&\quad -(n-1) \langle \nabla \phi,\nabla u^\gamma\rangle_{\tilde g(t)}\\
&=\mathbf{I}+\mathbf{II}+\mathbf{III}.
\end{split}
\end{equation}

We estimate each term one by one. We first handle $\mathbf{I}$. By conformal formula and choice of $\phi$,
\begin{equation}\label{eqn:cutoff}
\begin{split}
-\Delta_{\tilde g(t)}\phi&=-u^{-\frac{4}{n-2}}\phi'\left(\Delta_{\tilde g_0}d_{\tilde g_0}(x,x_{1})+2\nabla_{\tilde g_0}\log u\cdot\nabla_{\tilde g_0}d_{\tilde g_0}(x,x_{1})\right)\\
    &\quad -u^{-\frac{4}{n-2}}\phi''|\nabla_{\tilde g_0}d_{\tilde g_0}(x,x_{1})|^{2}\\
    &\leq C_0\left(1+|\phi|^{1/2}|\nabla \log u|_{\tilde g(t)}\right)
\end{split}
\end{equation}
where we have used Laplacian comparison on $\tilde g_0$. Therefore, 
\begin{equation}\label{eqn:lap-compar}
\mathbf{I}\leq C_1 \left(1+|\phi|^{1/2}|\nabla \log u|_{\tilde g(t)}\right).
\end{equation}
For $\mathbf{III}$, we might argue similarly (and simpler) that 
\begin{equation}
\mathbf{III}\leq C_2|\phi|^{1/2}|\nabla \log u|_{\tilde g(t)}
\end{equation}

Using Yamabe flow equation, $\mathcal{R}(\tilde g(t))\geq -n(n-1)r_1^2$ and our choice of $\gamma$, we also know 
\begin{equation}\label{eqn:evo-u}
\begin{split}
&\quad \left(\frac{\partial}{\partial t}-(n-1)\Delta_{\tilde g(t)} \right) u^{\gamma}\\
&=\gamma u^{\gamma-\frac{4}{n-2}}\left[(2-(n-1)(\gamma-1))|\nabla\log u|_{\tilde g_0}^{2}-\frac{n-2}{4}\mathcal{R}(\tilde g(t))\right]\\
&\leq - u^\gamma|\nabla\log u|_{\tilde g(t)}^{2}+C_3 ,
\end{split}
\end{equation}
and thus $\mathbf{II}\leq -\Phi |\nabla\log u|_{\tilde g(t)}^{2}+C_3$. To summarize, we obtain
\begin{equation}\label{eqn:ctoff-evo}
\begin{split}
\left(\frac{\partial}{\partial t}-(n-1)\Delta_{\tilde g(t)} \right) \Phi&\leq  C_4\left(1+|\phi|^{1/2}|\nabla \log u|_{\tilde g(t)}\right)-\Phi  |\nabla\log u|_{\tilde g(t)}^{2}+C_3\\
&\leq \frac{n}2\tilde{\mathcal{R}} \Phi-\frac12 \Phi |\nabla\log u|_{\tilde g(t)}^{2}+C_5
\end{split}
\end{equation}
where we have used $\tilde{\mathcal{R}}\geq -n(n-1)r_1^2$.

\medskip
On the other hand, the function $F=e^{-K\Lambda t}\tilde\ell$ satisfies 
\begin{equation}\label{eqn:eqn=F}
\left(\frac{\partial}{\partial t}-(n-1)\Delta_{\tilde g(t)} \right) F^{1+\e}\leq (1+\e)\b \tilde{\mathcal{R}} F^{1+\e}-\e (n-1) (1+\e) F^{\e-1}|\nabla F|^2
\end{equation}
where we will choose $\e=(2\b)^{-1} n-1>0$. Hence, the function 
\begin{equation*}
\Psi(x,t)=\phi\left(d_{\tilde g_0}(x,x_1)\right)\cdot  F^{1+\e}(x,t) 
\end{equation*}
satisfies 
\begin{equation}
\begin{split}
\left(\frac{\partial}{\partial t}-(n-1)\Delta_{\tilde g(t)} \right) \Psi&=-(n-1) F^{1+\e}\cdot \Delta_{\tilde g(t)} \phi  \\
&\quad +\phi  \cdot \left(\frac{\partial}{\partial t}-(n-1)\Delta_{\tilde g(t)} \right)  F^{1+\e}\\
&\quad -(n-1)(1+\e)F^\e  \langle \nabla \phi,\nabla F \rangle_{\tilde g(t)}\\
&=\mathbf{A}+\mathbf{B}+\mathbf{C}
\end{split}
\end{equation}
where \eqref{eqn:eqn=F} implies 
\begin{equation}
\begin{split}
\mathbf{B}&\leq  (1+\e)\b \tilde{\mathcal{R}} \Psi-\e (n-1) (1+\e) \phi F^{\e-1}|\nabla F|_{\tilde g(t)}^2.
\end{split}
\end{equation}

Using \eqref{eqn:cutoff}, we can also obtain
\begin{equation}
\begin{split}
\mathbf{A}\leq C_6 F^{1+\e} \left(1+|\phi|^{1/2} |\nabla \log u|_{\tilde g(t)}\right).
\end{split}
\end{equation} 

Finally, using $|\nabla d_{\tilde g_0}|_{\tilde g_0}=1$ we have 
\begin{equation}
\mathbf{C}\leq C_7 F^\e |\phi|^{1/2} |\nabla F|_{\tilde g(t)}
\end{equation}
and therefore,
\begin{equation}
\begin{split}
\left(\frac{\partial}{\partial t}-(n-1)\Delta_{\tilde g(t)} \right) \Psi
&\leq \frac{n}2  \tilde{\mathcal{R}} \Psi+C_8 F^{1+\e}+C_6 F^{1+\e} |\phi|^{1/2}|\nabla \log u|_{\tilde g(t)}.
\end{split}
\end{equation}

By combining it with the evolution inequality concerning $\Phi$, the function $G:=\Phi+L\Psi$ for some large $L>1$, satisfies
\begin{equation}
\begin{split}
 \left(\frac{\partial}{\partial t}-(n-1)\Delta_{\tilde g(t)} \right) G
&\leq \frac{n}2 \tilde{\mathcal{R}} G+C_5 L-\frac{L}{2} \phi u^\gamma |\nabla\log u|_{\tilde g(t)}^2\\
&\quad +C_8 F^{1+\e} +C_6 F^{1+\e} |\phi|^{1/2}|\nabla \log u|_{\tilde g(t)}\\
&\leq \frac{n}2 \tilde{\mathcal{R}} G+L^{-2}C_9 \Lambda^{2+2\e}+C_{10}L^2
\end{split}
\end{equation}
and $G=0$ on $\partial B_{\tilde g_0}(x_1,1)\times [0,\tilde t_1]$. By maximum principle, we control $G$ by its kernel representation:
\begin{equation}
\begin{split}
G(x_1,\tilde t_1)&\leq \int_\Omega K_\Omega(x_1,\tilde t_1;y,0)\, G(y,0)\,d\mathrm{vol}_{y,\tilde g(0)}\\
&\quad +C_{11}\left(L^{-2} \Lambda^{2+2\e}+L^2\right)\int^{\tilde t_1}_0\int_\Omega K_\Omega(x_1,\tilde t_1;y,s)\,d\mathrm{vol}_{y,\tilde g(s)}  ds\\
&=\mathbf{IV}+\mathbf{V}
\end{split}
\end{equation}
where $\Omega=B_{\tilde g_0}(x_1,1)$. 

By Proposition~\ref{prop:heatkernelestimate},
\begin{equation}
\begin{split}
\int^{t}_0\int_\Omega K_\Omega(x,t;y,s)\,d\mathrm{vol}_{y,\tilde g(s)}  ds\leq  \int^{t}_0 C_{12} ds=C_{12}t
\end{split}
\end{equation}
so that 
\begin{equation}
\mathbf{V}\leq C_{13}\left(L^{-2} \Lambda^{2+2\e}+L^2\right)\tilde t_1.
\end{equation}

For $\mathbf{IV}$ since $\tilde\ell(0)=0$,
\begin{equation}
\begin{split}
\mathbf{IV}&\leq  \int_\Omega K_\Omega(x_1,\tilde t_1;y,0)\, \left(1+ Lu^\gamma\right)(y,0)\,d\mathrm{vol}_{y,\tilde g(0)}\leq C_{14}L.
\end{split}
\end{equation}

Together with $\tilde\ell (x_1,\tilde t_1)=\frac14 \Lambda\leq \a \tilde t_1^{-1}$, we obtain
\begin{equation}
\begin{split}
\left( \frac14 e^{-4\a K} \Lambda \right)^{1+\e}&\leq C_{14}L+4\a \Lambda^{-1}C_{13}\left(L^{-2} \Lambda^{2+2\e}+L^2\right).
\end{split}
\end{equation}

Therefore, if we choose $L=\Lambda^{(1+\e)/2}$, then 
\begin{equation}
\begin{split}
\left( \frac14 e^{-4\a K} \Lambda \right)^{1+\e}&\leq C_{14}\Lambda^{(1+\e)/2}+8\a C_{13}\Lambda^{\e}
\end{split}
\end{equation}
which is impossible if we choose $\Lambda(n,\a,\b,K)$ large enough to conflict with the above inequality. That said, such $t_1$ does not exist and hence, we have shown that for such $\Lambda$, we have 
\begin{equation}
\ell(x,t)\leq \Lambda \rho(x)^{-2}
\end{equation}
for all $(x,t)\in B_{g_0}(x_0,1)\times [0,T]$. The result follows by taking $x=x_0$.
\end{proof}

A simple modification of the proof shows that indeed $\mathcal{R}(g(t))$ lower bound is preserved along uniformly conformal Yamabe flow. 
\begin{lma}\label{lma:scalar-lower}
Suppose $g_0$ is a metric on $M$ and $x_0\in M$ is a point such that 
\begin{enumerate}
\item $B_{g_0}(x_0,r)\Subset M$;
\item $\Ric(g_0)\geq -(n-1)r^{-2}$.
\end{enumerate}
Let $g(t)$ be a smooth solution to the Yamabe flow on $M\times [0,Tr^2]$ (not necessarily complete) with $g(0)=g_0$ such that $$\a^{-1}g_0\leq g(t)\leq \a g_0$$
on $M\times (0,Tr^2]$ for some $\a>1$ and $r>0$. Then there is $\Lambda_1(n,\a)>0$ such that for all $t\in (0,Tr^2]$, $\mathcal{R}(g(x_0,t))\geq -\Lambda_1r^{-2}$.
\end{lma}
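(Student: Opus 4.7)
The strategy is to apply the proof of Lemma~\ref{lma:localMP} to $\ell:=(-\mathcal{R})_+$. By Lemma~\ref{lma:evo-R}, $\yheat \mathcal{R}=\mathcal{R}^2$, so in the barrier/distribution sense
\begin{equation*}
\yheat \ell \leq -\ell^2 \leq 0,
\end{equation*}
and $\ell$ satisfies the structural hypothesis of Lemma~\ref{lma:localMP} with $\b=K=0$. Tracing the Ricci lower bound $\Ric(g_0)\geq -(n-1)r^{-2}$ yields $\ell(\cdot,0)\leq n(n-1)r^{-2}$, so the initial condition is in place. The hypotheses of Lemma~\ref{lma:localMP} that are not directly available here are (i) $|\Rm(g(t))|\leq \a t^{-1}$, (ii) $\mathcal{R}(g(t))\geq -n(n-1)r^{-2}$ on the whole time interval, and (iii) the pointwise control $\ell \leq \a t^{-1}$.

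I would proceed by contradiction, following exactly the rescaling scheme of the proof of Lemma~\ref{lma:localMP}: if the conclusion fails for a large $\Lambda_1$, pick a first time $t_1$ and $x_1\in B_{g_0}(x_0,1)$ at which $\ell(x_1,t_1)=\Lambda\rho(x_1)^{-2}$ for a large universal $\Lambda$, and parabolically rescale by $r_1=\rho(x_1)/2$. On the unit ball $B_{\tilde g_0}(x_1,1)\times[0,\tilde t_1]$ one has $\tilde\ell\leq \Lambda$, $\tilde\ell(\cdot,0)\leq n(n-1)/4$, and $\tilde\ell(x_1,\tilde t_1)=\Lambda/4$. Crucially, the missing hypothesis (ii) is replaced for free by $\tilde{\mathcal{R}}\geq -\Lambda$ on the rescaled slab (from the definition of $t_1$), so the cutoff evolution inequality
\begin{equation*}
\yheat \Phi \leq \tfrac{n}{2}\tilde{\mathcal{R}}\Phi - \tfrac{1}{2}\Phi|\nabla\log u|^2 + C_5
\end{equation*}
for $\Phi=\phi(d_{\tilde g_0}(\cdot,x_1))\cdot u^\gamma$ with $\gamma=1+3/(n-1)$ goes through with $C_5=C_5(n,\a,\Lambda)$.

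The new ingredient, replacing the missing bound (iii) that was used at the very last step of the proof of Lemma~\ref{lma:localMP} to force $\tilde t_1\leq 4\a/\Lambda$, is a pointwise decay of $\tilde\ell$ of the form $\tilde\ell(x,\tilde t)\leq C(n,\a)/\tilde t$ on a slightly smaller ball, deduced from the sharper inequality $\yheat \tilde\ell\leq -\tilde\ell^2$ alone. I would obtain this by applying the localized maximum principle to $H=\tilde\ell\cdot \Phi$: at an interior spatial maximum, using $\nabla H=0$ to eliminate the gradient cross-term gives
\begin{equation*}
\yheat H \leq -\Phi\tilde\ell^2 + \tilde\ell\bigl(\yheat \Phi + 2(n-1)\Phi^{-1}|\nabla\Phi|^2\bigr)
\end{equation*}
at that point, and the bracketed quantity is controlled by $C+C'\phi\tilde\ell$ using the cutoff estimates together with the uniform conformality $\a^{-1}\leq u^{4/(n-2)}\leq \a$; the $-\Phi\tilde\ell^2$ piece then dominates for $H$ sufficiently large (after carefully tuning the cutoff parameters so as to absorb the $\phi\tilde\ell^2$ contribution coming from the $-\frac{\gamma(n-2)}{4}\phi u^{\gamma-4/(n-2)}\mathcal{R}$ part of $\yheat \Phi$), yielding by ODE comparison $M(\tilde t):=\max_x H(\cdot,\tilde t)\leq C/\tilde t$ and hence $\tilde\ell\leq C(n,\a)/\tilde t$ at $x_1$. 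This gives $\Lambda/4 \leq C/\tilde t_1$, i.e.\ $\tilde t_1\leq 4C/\Lambda$, the substitute for the estimate $\tilde t_1\leq 4\a/\Lambda$ used in Lemma~\ref{lma:localMP}.

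With $\tilde t_1$ thus bounded, the remainder of the proof of Lemma~\ref{lma:localMP} carries over essentially verbatim: one sets $G=\Phi+L\Psi$ with $\Psi=\phi\tilde\ell^{1+\e}$ (no exponential weighting is needed, as $K=0$), applies the Duhamel formula against the Dirichlet heat kernel on $\Omega=B_{\tilde g_0}(x_1,1)$ for the operator $\yheat-\frac{n}{2}\mathcal{R}$, and derives the contradiction for $\Lambda$ large. The necessary mass estimate $\int_0^{\tilde t_1}\int_\Omega K_\Omega\,d\mathrm{vol}_{\tilde g(s)}\,ds\leq C(n,\a,\Lambda)\tilde t_1$ follows from a direct Gronwall argument for $s\mapsto \int_\Omega K_\Omega\,d\mathrm{vol}_{\tilde g(s)}$ using $|\tilde{\mathcal{R}}|\leq\Lambda$, bypassing the need for the Gaussian bound of Proposition~\ref{prop:heatkernelestimate}. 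The hard part will be the first step: establishing the universal decay $\tilde\ell\lesssim 1/\tilde t$ correctly, because the positive $\phi\tilde\ell$ term in $\yheat \Phi$ is delicate and the cutoff parameters have to be balanced so that the $-\Phi\tilde\ell^2$ term genuinely dominates.
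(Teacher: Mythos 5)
Your instinct that the key is the quadratic term $-\ell^2$ in the evolution of $\ell=\mathcal{R}_-$ is right, but the overall architecture you propose does not match the available hypotheses and is in any case unnecessary. The paper proves this lemma \emph{directly}, without any rescaling or heat-kernel representation: it applies a spatial maximum principle to the additive test function $F=\phi(d_{g_0}(\cdot,x_0))\cdot\mathcal{R}_-+L_0u^\gamma$ on $B_{g_0}(x_0,1)\times[0,T]$. Since $\phi\yheat\mathcal{R}_-\leq-\phi\mathcal{R}_-^2$, the $\mathcal{R}$-term in $\yheat(L_0u^\gamma)$ contributes only $\frac{\gamma(n-2)}{4}L_0u^{\gamma-4/(n-2)}\mathcal{R}_-$, \emph{linear} in $\mathcal{R}_-$, and the cross term from $\nabla F=0$ is likewise at most linear; the quadratic term $-\phi\mathcal{R}_-^2$ then dominates at an interior maximum, while the parabolic boundary $t=0$ is controlled by $\mathcal{R}_-(g_0)\leq n(n-1)$. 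This gives a uniform bound $F\leq C(n,\alpha)$ in one pass. In contrast, your multiplicative ansatz $H=\tilde\ell\cdot\phi u^\gamma$ produces, from the same $\mathcal{R}$-term in $\yheat u^\gamma$, a bad contribution $\frac{\gamma(n-2)}{4}u^{-4/(n-2)}\Phi\tilde\ell^2$ of exactly the same order as the good term $-\Phi\tilde\ell^2$; with $\gamma=1+3/(n-1)$ (forced by the gradient-sign requirement $2-(n-1)(\gamma-1)<0$) the coefficient $\frac{\gamma(n-2)}{4}u^{-4/(n-2)}$ is \emph{not} less than $1$ for $n\geq6$ and general $\alpha$, so the ``careful tuning'' you flag is not a tuning problem but a structural one. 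The additive combination is what makes the quadratic term genuinely dominate.

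The second half of your proposal has a more serious gap. You want to reuse the Duhamel/heat-kernel step from the proof of Lemma~\ref{lma:localMP}, but Proposition~\ref{prop:heatkernelestimate} requires $|\Rm(g(t))|\leq\alpha t^{-1}$, which is hypothesis (i) of Lemma~\ref{lma:localMP} and is \emph{not} assumed in Lemma~\ref{lma:scalar-lower} (you noted this yourself). The Gronwall substitute you propose for $s\mapsto\int_\Omega K_\Omega\,d\mathrm{vol}_{\tilde g(s)}$ invokes ``$|\tilde{\mathcal{R}}|\leq\Lambda$''; but the first-bad-time setup only gives $\tilde{\mathcal{R}}\geq-\Lambda$ (i.e.\ $\tilde\ell\leq\Lambda$), with no control on $\mathcal{R}_+$. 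Since $\partial_s\int_\Omega K_\Omega\,d\mathrm{vol}_{\tilde g(s)}$ picks up a term $-\frac{n}{2}\int K_\Omega\mathcal{R}_+\,d\mathrm{vol}$ of the wrong sign, and the boundary flux $-(n-1)\int_{\partial\Omega}\partial_\nu K\geq0$ is not a priori small, this Gronwall argument does not close; the same issue hits the initial-data term $\mathbf{IV}$. So even after the sublemma, the rescaling/Duhamel scaffold would need inputs you do not have. Fortunately none of it is needed: once you set up the additive maximum-principle argument correctly, the uniform bound on $\mathcal{R}_-$ follows at once.
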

\begin{proof} 
By rescaling, we might assume $r=1$. This follows from a simpler modification of the proof of Lemma~\ref{lma:localMP}. We consider the test function
\begin{equation}
F=\phi\left(d_{ g_0}(x,x_0)\right)\cdot \varphi+L_0u^\gamma 
\end{equation}
where $\varphi=\mathcal{R}_-$, $L_0$ is a large constant, $g(t)=u^\frac4{n-2}g_0$ and $\phi$ is smooth non-increasing function $\phi:[0,+\infty)\to[0,1]$ such that $\phi\equiv 1$ on $[0,\frac14]$, vanishes outside $[0,1]$ and satisfies $|\phi'|^2\leq 10^3\phi,\phi''\geq -10^3\phi$. Then we have
\begin{equation}
\begin{split}
&\quad \yheat F\\
&\leq -(n-1)\Delta_{g(t)} \phi \cdot \varphi-2\langle \nabla \phi,\nabla\varphi\rangle_{g(t)}-\phi\varphi^2\\
&\quad+\frac{\gamma(n-2)}{4} u^{\gamma-\frac4{n-2}}L_0\varphi-L_0 u^{\gamma}|\nabla\log u|^2_{g(t)}
\end{split}
\end{equation}
in the sense of barrier. Here we have used the evolution equation of $u$, see \eqref{eqn:evo-u}. We might assume the function to be smooth when applying maximum principle. 

We now simplify the evolution inequality. We will use $C_i$ to denote any constant depending only on $n,\a$. Using the same derivation of \eqref{eqn:lap-compar} and choice of $\phi$, 
\begin{equation}
\begin{split}
-(n-1)\Delta_{g(t)}\phi \leq C_1\left( 1+\phi^{1/2}|\nabla\log u|_{g(t)}\right)
\end{split}
\end{equation}

On the other hand at its maximum point, $\nabla F=0$ so that $$\phi\nabla \varphi+\varphi \nabla\phi +L_0\gamma u^{\gamma-1}\nabla u=0$$ 
and thus,
\begin{equation}
\begin{split}
-2\langle \nabla\phi ,\nabla\varphi \rangle 
&=2\phi^{-1}\langle \nabla\phi , \varphi \nabla\phi+L_0\gamma u^{\gamma-1}\nabla u \rangle \\
&\leq C_2\varphi +C_2L_0  \phi^{-1/2}|\nabla\log u|_{g(t)}.
\end{split}
\end{equation}

Substituting it back to the evolution equation of $F$ yields 
\begin{equation}
\begin{split}
&\quad \yheat F\\
&\leq  C_1\varphi \left( 1+\phi^{1/2}|\nabla\log u|\right) +C_2\varphi +C_2L_0  \phi^{-1/2}|\nabla\log u|\\
&\quad -\phi\varphi^2+C_3L_0 \varphi-L_0 u^\gamma |\nabla \log u|^2\\
&\leq C_4 L_0\varphi+C_4 \phi^{-1}L_0+C_4L_0^{-1}\varphi^2\phi-\phi\varphi^2
\end{split}
\end{equation}
at its maximum. Hence if we choose $L_0=2C_4$, then at its interior maximum point inside the support of $\phi$, we have 
\begin{equation}
\begin{split}
(\phi\varphi)^2\leq C_5\varphi\phi+C_5 \leq \frac12 (\phi\varphi)^2+C_6.
\end{split}
\end{equation} 

That said, for all $(x,t)\in B_{g_0}(x_0,1)\times [0,T]$, 
\begin{equation}
F(x,t)\leq \sqrt{2C_6}+L_0:=C_7.
\end{equation}
Evaluating at $x_0$ gives us the result.
\end{proof}

\medskip

More generally,  we might  localize its derivation using kernel representation.
\begin{lma}\label{lma:localMP-nonzero}
Suppose $(M,g_0,x_0)$ is a pointed manifold and $g(t),t\in [0,Tr^2]$ is a smooth solution to the Yamabe flow (not necessarily complete) as in Lemma~\ref{lma:localMP}. If $\ell(x,t)$ is a non-negative continuous function on $M\times [0,Tr^2]$ such that $\ell(x,t)\leq \a t^{-1}$ on $M\times (0,Tr^2]$ and satisfies 
\begin{equation}
\yheat \ell\leq \b\mathcal{R}\ell 
\end{equation}
in the barrier sense, for some $0\leq \b<\frac{n}2$, then there is $\hat\Lambda(n,\a,\b),L(n,\b)>0$ such that for all $t\in [0,Tr^2]$,
\begin{equation}
\ell(x_0,t)\leq \hat\Lambda r^{-2}+e^{L t}\int_{\Omega} K_\Omega(x,t;y,0) \,\ell(y,0)\,d\mathrm{vol}_{y,g(0)}
\end{equation}
where $\Omega=B_{g_0}(x_0,r)$.
\end{lma}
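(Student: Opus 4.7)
Plan: After the parabolic rescaling used in the proof of Lemma~\ref{lma:localMP}, I reduce to the case $r=1$ and search for constants $\hat\Lambda(n,\alpha,\beta)$ and $L(n,\beta)$. The strategy is to turn $\ell$ into a genuine subsolution of the operator whose Dirichlet fundamental solution is $K_\Omega$, and then invoke the conjugate heat-kernel representation.

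First I set $L := (n/2-\beta)\,n(n-1)$ and $\tilde\ell := e^{-Lt}\ell$. Since $\tilde\ell\geq 0$ and $\mathcal{R}(g(t))\geq -n(n-1)$ (assumption (ii) of Lemma~\ref{lma:localMP}), a case check on the sign of $\mathcal{R}$ shows
\[
\yheat\tilde\ell = e^{-Lt}\bigl(\yheat\ell-L\ell\bigr)\leq e^{-Lt}\bigl(\beta\mathcal{R}\ell-L\ell\bigr)\leq \tfrac{n}{2}\mathcal{R}\tilde\ell,
\]
so $\tilde\ell$ is a subsolution of the natural operator for $K_\Omega$. Unlike Lemma~\ref{lma:localMP}, the PDE is purely linear (no $K\ell^2$ term), so one does not need the $(1+\epsilon)$-power device; this is what permits a linear bound in the initial data.

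Next, I test against $K_\Omega$: with the radial cutoff $\phi=\phi(d_{g_0}(\cdot,x_0))$ of Lemma~\ref{lma:localMP} (so $\phi\equiv 1$ on $B_{g_0}(x_0,1/2)$ and vanishes outside $\Omega$), set
\[
J(s):=\int_\Omega \phi\,\tilde\ell(y,s)\,K_\Omega(x_0,t;y,s)\,d\mathrm{vol}_{g(s)}.
\]
Differentiating in $s$ using the conjugate equation $(\partial_s+(n-1)\Delta)K_\Omega=0$, the volume evolution $\partial_s\log d\mathrm{vol}_{g(s)}=-\tfrac{n}{2}\mathcal{R}$, the inequality $\yheat\tilde\ell\leq\tfrac{n}{2}\mathcal{R}\tilde\ell$, and integration by parts (whose boundary contributions vanish because $\phi|_{\partial\Omega}=0$ and $K_\Omega|_{\partial\Omega}=0$), gives
\[
\partial_s J(s)\leq (n-1)\int_\Omega \tilde\ell\bigl[K_\Omega\,\Delta\phi+2\langle\nabla K_\Omega,\nabla\phi\rangle\bigr]\,d\mathrm{vol}_{g(s)}.
\]
Integrating in $s$ and using $J(t^-)=\tilde\ell(x_0,t)$ and $J(0)\leq\int_\Omega K_\Omega(x_0,t;y,0)\tilde\ell(y,0)\,d\mathrm{vol}_{g(0)}$ yields
\[
\tilde\ell(x_0,t)\leq \int_\Omega K_\Omega(x_0,t;y,0)\,\tilde\ell(y,0)\,d\mathrm{vol}_{g(0)}+\mathcal{E},
\]
where $\mathcal{E}$ is an annular error supported where $\nabla\phi\neq 0$, i.e.\ on $A:=B_{g_0}(x_0,1)\setminus B_{g_0}(x_0,1/2)$. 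Converting $\tilde\ell$ back to $\ell$ via $e^{Lt}$ gives the claimed estimate.

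The main obstacle is controlling $\mathcal{E}$ by a pure geometric constant $\hat\Lambda(n,\alpha,\beta)$: the a-priori bound $\tilde\ell\leq \alpha/s$ degenerates at $s\to 0^+$, so the Gaussian decay $\exp(-c/(t-s))$ of $K_\Omega$ on $A$ (Proposition~\ref{prop:heatkernelestimate}), together with a standard gradient bound on $\nabla K_\Omega$, must compensate the $1/s$ singularity when integrating in time. One splits $s\in[0,s_0]\cup[s_0,t]$ with $s_0$ chosen in terms of $t$ and the kernel decay rate, using the uniform conformality $\alpha^{-1}g_0\leq g(s)\leq \alpha g_0$ to pass between $g_0$- and $g(s)$-norms, and adds the conformal helper $L_0\Phi$ with $\Phi=\phi u^\gamma$, $\gamma=1+3/(n-1)$, from Lemma~\ref{lma:localMP} to absorb any residual terms involving $|\nabla\log u|$ via the good $-\tfrac{1}{2}\Phi|\nabla\log u|^2$ produced by $\yheat\Phi$. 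After this bookkeeping the annular contribution is dominated by a constant depending only on $n,\alpha,\beta$, yielding $\hat\Lambda$.
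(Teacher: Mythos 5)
Your overall strategy is genuinely different from the paper's, and the plan has a concrete gap in the error estimate that your outline does not close.

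The paper's proof does not redo a Duhamel computation. It defines $\hat\ell(x,t)=\int_\Omega K_\Omega(x,t;y,0)\,(\phi\ell)(y,0)\,d\mathrm{vol}_{g(0)}$, observes that $\hat\ell$ is a genuine solution of $\yheat\hat\ell=\tfrac{n}{2}\mathcal{R}\hat\ell$ with $\hat\ell\geq 0$, and sets $u=(\ell-e^{Lt}\hat\ell)_+$. Wherever $u>0$ one computes $\yheat u\leq\beta\mathcal{R}u-\bigl[(\tfrac{n}{2}-\beta)\mathcal{R}+L\bigr]e^{Lt}\hat\ell\leq\beta\mathcal{R}u$ after choosing $L=n(n-1)(\tfrac{n}{2}-\beta)$. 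Since $u(\cdot,0)=0$ on $B_{g_0}(x_0,1/2)$ and $u\leq\ell\leq\alpha t^{-1}$, the already-proved Lemma~\ref{lma:localMP} (with $K=0$ and radius $1/2$) yields $u(x_0,t)\leq 4\Lambda$. That is the entire argument: all spatial localization is delegated to Lemma~\ref{lma:localMP}, whose point-picking structure gives a uniform interior bound on the localized quantity and therefore never encounters a $1/s$ singularity near $s=0$.

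Your route has a real obstruction exactly where you flag one. In your error term
$$\mathcal{E}=(n-1)\int_0^t\int_A\tilde\ell\bigl[K_\Omega\,\Delta\phi+2\langle\nabla K_\Omega,\nabla\phi\rangle\bigr]\,d\mathrm{vol}_{g(s)}\,ds,$$
the only a~priori control on $\tilde\ell$ away from $s=0$ is $\tilde\ell\leq\alpha/s$, which is not integrable over $s\in(0,t]$. The Gaussian factor from Proposition~\ref{prop:heatkernelestimate} does not rescue this: for $y$ in the fixed annulus $A$ and $s\in[0,t/2]$ it is bounded below by $\exp\!\bigl(-C/t\bigr)$, a constant independent of $s$, so the $1/s$ blow-up survives. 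Nor can you replace $\alpha/s$ by $\ell(y,0)$ on $A\times(0,s_0]$: continuity of $\ell$ gives no bound that is uniform in the geometry, and the lemma precisely has to allow the answer to depend on $\int K_\Omega(\cdot,t;y,0)\ell(y,0)$ rather than on any sup of $\ell(\cdot,0)$. In addition, the term $\langle\nabla K_\Omega,\nabla\phi\rangle$ requires a pointwise gradient estimate for the Dirichlet heat kernel that Proposition~\ref{prop:heatkernelestimate} does not supply, and grafting the helper $\Phi=\phi u^\gamma$ onto the Duhamel identity for $J$ does not produce the sign-good term $-\tfrac12\Phi|\nabla\log u|^2$ in the right place because you are no longer estimating $\yheat$ of a product $\phi\cdot(\text{quantity})$; you are integrating against the conjugate kernel. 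So as written, the annular error is not controlled. The fix is essentially the paper's: instead of integrating in $s$, subtract $e^{Lt}\hat\ell$ and feed the nonnegative part of the difference back into Lemma~\ref{lma:localMP}, which has already dealt with the cutoff once and for all.
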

\begin{proof}
By rescaling, we assume $r=1$. We let $\phi$ be a cutoff function such that $\phi\equiv 1$ on $B_{g_0}(x_0,\frac12)$ and vanishes outside $B_{g_0}(x_0,1)$. Consider the function $u=(\ell-e^{Lt}\hat\ell)_+$ where
\begin{equation}
\hat\ell(x,t):=\int_\Omega  K_\Omega(x,t;y,0) \,(\phi\cdot \ell)(y,0)\,d\mathrm{vol}_{y,g(0)}.
\end{equation}

Whenever $u>0$, it satisfies 
\begin{equation}
\begin{split}
\yheat u&\leq   \b \mathcal{R} u-\left[\left(\frac{n}2-\b\right)\mathcal{R}  +L\right] e^{Lt}\hat \ell\\
&\leq  \b \mathcal{R} u
\end{split}
\end{equation}
in the sense of barrier, provided that we choose $L= n(n-1)\left(\frac{n}2-\b\right)$. The evolution inequality is also in distributional sense, \cite{MantegazzaMascellaniUraltsev2014}. Since $\hat \ell\geq 0$ and $u(0)=0$ on $B_{g_0}(x_0,\frac12)$, we might now apply Lemma~\ref{lma:localMP} to $u$ so that $
u(x_0,t)\leq 4\Lambda $ for all $t\in [0,T]$, where $\Lambda=\Lambda(\a,\b,0,n)$ is the constant from Lemma~\ref{lma:localMP}. This completes the proof.
\end{proof}
\medskip

As an application of the localized maximum principle, we have local persistence of Ricci lower bound.
\begin{prop}\label{prop:Ric-preserved}
Suppose $(M^n,g_0,x_0)$ is a pointed manifold with $n\geq 5$, and $g(t),t\in [0,Tr^2]$ is a smooth solution to the Yamabe flow (not necessarily) complete as in Lemma~\ref{lma:localMP}. If in addition $g_0$ is locally conformally flat,  there is $\Lambda(n,\a)>0$ such that for all $t\in [0,Tr^2]$,
\begin{equation}
\Ric(x_0,t)\geq -\Lambda r^{-2}.
\end{equation}
\end{prop}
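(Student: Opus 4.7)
The plan is to apply the localized maximum principle of Lemma~\ref{lma:localMP} to the Ricci lower bound function $\ell(x,t)=\inf\{s>0:\Ric(x,t)+s\,g(x,t)\geq 0\}$ introduced just before Lemma~\ref{lma:evo-Ric-new}. By Lemma~\ref{lma:evo-Ric-new}, this $\ell$ satisfies
\begin{equation*}
\yheat \ell \leq 2\mathcal{R}\ell+n\ell^2
\end{equation*}
in the barrier sense, and by the standard regularization procedure of \cite{MantegazzaMascellaniUraltsev2014} the same inequality holds in the distributional sense. This matches the hypothesis of Lemma~\ref{lma:localMP} with $\b=2$ and $K=n$. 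It is here that the dimension restriction $n\geq 5$ enters: the requirement $\b<n/2$ in Lemma~\ref{lma:localMP} is precisely $2<n/2$, i.e., $n\geq 5$.

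Next I would verify the remaining quantitative hypotheses with slightly enlarged constants. From $|\Rm(g(t))|\leq \a/t$ we obtain $|\Ric(g(t))|_{g(t)}\leq (n-1)\a/t$, hence $\ell(x,t)\leq (n-1)\a/t$ on $M\times(0,Tr^2]$; accordingly I apply Lemma~\ref{lma:localMP} with the enlarged constant $\tilde\a:=(n-1)\a$ in place of $\a$, which is also compatible with the other upper bounds in Lemma~\ref{lma:localMP} since $\tilde\a\geq\a$. The initial hypothesis $\Ric(g_0)\geq -(n-1)r^{-2}$ translates to $\ell(\cdot,0)\leq (n-1)r^{-2}$, which narrowly misses the normalization $\ell(0)\leq r^{-2}$ required in Lemma~\ref{lma:localMP}. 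I would fix this by a trivial shrinkage of the radius: set $r':=r/\sqrt{n-1}<r$, so that $\ell(\cdot,0)\leq (r')^{-2}$. All remaining assumptions of Lemma~\ref{lma:localMP} persist with $r'$ in place of $r$: the ball $B_{g_0}(x_0,r')\Subset B_{g_0}(x_0,r)\Subset M$ is precompact, the conformal control $\tilde\a^{-1}g_0\leq g(t)\leq \tilde\a g_0$ is unaffected, and $\mathcal{R}(g(t))\geq -n(n-1)r^{-2}\geq -n(n-1)(r')^{-2}$. Since $[0,Tr^2]=[0,T(n-1)(r')^2]$ as time intervals, Lemma~\ref{lma:localMP} applies with the parameter $T$ replaced by $T(n-1)$.

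Applying Lemma~\ref{lma:localMP} then yields $\ell(x_0,t)\leq \Lambda'(r')^{-2}=(n-1)\Lambda'\,r^{-2}$ for all $t\in(0,Tr^2]$, where $\Lambda'=\Lambda'(n,\a)$ is the constant from Lemma~\ref{lma:localMP}. Setting $\Lambda:=(n-1)\Lambda'$ gives the desired Ricci lower bound $\Ric(x_0,t)\geq -\Lambda r^{-2}$. I do not anticipate any serious obstacle in this argument; the only items that require care are the promotion of the barrier-sense evolution inequality for $\ell$ to the distributional form needed by Lemma~\ref{lma:localMP}, and the bookkeeping of the radius shrinkage $r\rightsquigarrow r'$, both of which are routine. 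The quadratic term $n\ell^2$ in the evolution of $\ell$ causes no difficulty, since Lemma~\ref{lma:localMP} is designed precisely to accommodate such a nonlinearity.
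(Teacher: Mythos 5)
Your proposal is correct and takes essentially the same approach as the paper, which simply invokes Lemma~\ref{lma:evo-Ric-new}, Lemma~\ref{lma:localMP}, and $n\geq 5$. You spell out the constant bookkeeping (enlarging $\a$ so that $\ell\leq\tilde\a t^{-1}$ and shrinking $r$ so that $\ell(\cdot,0)\leq (r')^{-2}$) and the barrier-to-distributional upgrade, all of which the paper leaves implicit and all of which are handled correctly.
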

\begin{proof}
Since $g(t)$ is conformal to $g_0$, $g(t)$ is locally conformally flat for all $t\in [0,Tr^2]$. The result follows from Lemma~\ref{lma:evo-Ric-new}, Lemma~\ref{lma:localMP} and $n\geq 5$.
\end{proof}

\medskip

Another consequence is a psuedo-locality type estimate when the curvature is initially pinched from flat manifold. 
\begin{prop}\label{prop:pseudo-YF}
Suppose $(M^n,g_0,x_0)$ is a pointed locally conformally flat manifold with $n\geq 5$, and $g(t),t\in [0,Tr^2]$ is a smooth solution to Yamabe flow (not necessarily complete) as in Lemma~\ref{lma:localMP}. If in addition $x_0$ satisfies
\begin{equation}
\int^{r^2}_0 s \fint_{B_{g_0}(x_0,s)} |\mathcal{R}_{g_0}|\,d\mathrm{vol}_{g_0}\,ds\leq \Lambda_0
\end{equation}
for some $\Lambda_0>0$, then there is $L(n,\a),\Lambda_1(n,\a)>0$ such that for all $t\in (0,Tr^2]$,
\begin{equation}
\mathcal{R}(x_0,t)\leq \Lambda_1 r^{-2}+L\Lambda_0 t^{-1}.
\end{equation}
\end{prop}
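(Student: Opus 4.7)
The plan is to apply the integral localized maximum principle of Lemma~\ref{lma:localMP-nonzero} to $\ell=\mathcal{R}_+$, then estimate the resulting Dirichlet heat-kernel integral using the Gaussian bound of Proposition~\ref{prop:heatkernelestimate} together with Bishop--Gromov volume comparison. By parabolic rescaling I first reduce to $r=1$; the hypothesis then becomes $\int_0^1 s\,k(s)\,ds\leq \Lambda_0$ where $k(s):=\fint_{B_{g_0}(x_0,s)}|\mathcal{R}_{g_0}|\,d\mathrm{vol}_{g_0}$. Lemma~\ref{lma:evo-R} gives $\yheat\mathcal{R}=\mathcal{R}^2$, and a case analysis on $\{\mathcal{R}_+>0\}$ (where $\mathcal{R}_+=\mathcal{R}$ and the identity is automatic) and on $\{\mathcal{R}_+=0\}$ (where any smooth upper barrier $\phi\geq\mathcal{R}_+$ attains an interior local minimum, forcing $\Delta\phi\geq 0$ and $\partial_t\phi=0$) shows $\yheat\mathcal{R}_+\leq \mathcal{R}\cdot\mathcal{R}_+$ in the barrier sense, i.e.\ with $\b=1<n/2$. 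Since $\mathcal{R}_+\leq C(n,\a)/t$ follows from $|\Rm(g(t))|\leq\a/t$, Lemma~\ref{lma:localMP-nonzero} applies and yields
\begin{equation*}
\mathcal{R}(x_0,t)\leq \mathcal{R}_+(x_0,t) \leq \hat\Lambda + e^{Lt}\int_\Omega K_\Omega(x_0,t;y,0)\,\mathcal{R}_+(y,0)\,d\mathrm{vol}_{g_0}(y),
\end{equation*}
with $\Omega=B_{g_0}(x_0,1)$.

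The main step is then to show the heat-kernel integral is at most $C(n,\a)\Lambda_0/t$. Applying the Gaussian bound of Proposition~\ref{prop:heatkernelestimate}, the layer-cake identity $e^{-d^2/(Ct)}=\int_d^\infty\frac{2s}{Ct}e^{-s^2/(Ct)}\,ds$, and Fubini, the integral is dominated by
\begin{equation*}
\frac{C}{\mathrm{Vol}_{g_0}(B_{g_0}(x_0,\sqrt t))}\int_0^1\frac{2s}{Ct}e^{-s^2/(Ct)}\,\mathrm{Vol}_{g_0}(B_{g_0}(x_0,s))\,k(s)\,ds
\end{equation*}
plus an exponentially small tail from $s>1$ that is absorbed into the constant $\hat\Lambda$. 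Bishop--Gromov applied with $\Ric(g_0)\geq -(n-1)$ on $B_{g_0}(x_0,1)$ gives $\mathrm{Vol}_{g_0}(B_{g_0}(x_0,s))/\mathrm{Vol}_{g_0}(B_{g_0}(x_0,\sqrt t))\leq C(n)(s/\sqrt t)^n$ in the regime $\sqrt t\leq s\leq 1$, while for $s\leq\sqrt t$ the ratio is bounded by $1$. Combined with the elementary estimate $(s/\sqrt t)^n e^{-s^2/(Ct)}\leq C(n)$, the integral reduces to $\frac{C}{t}\int_0^1 s\,k(s)\,ds\leq C\Lambda_0/t$. Undoing the rescaling then yields the scale-invariant conclusion $\mathcal{R}(x_0,t)\leq \Lambda_1 r^{-2}+L\Lambda_0 t^{-1}$.

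The main technical hurdle is the bookkeeping in the last step---balancing the Gaussian weight, the local volume comparison, and the integral smallness hypothesis. Crucially, the Ricci lower bound $\Ric(g_0)\geq -(n-1)r^{-2}$ is assumed only on $B_{g_0}(x_0,r)$, so Bishop--Gromov in the form needed is available only on the window $\sqrt t\leq s\leq r$; this is exactly the range in which the radial integration takes place, and this matching is what makes the argument close.
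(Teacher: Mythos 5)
Your proposal follows the same route as the paper's proof: reduce to $r=1$, apply Lemma~\ref{lma:localMP-nonzero} to $\ell=\mathcal R_+$ with $\b=1<\frac n2$ (using $n\geq5$), and then estimate the Dirichlet kernel integral against $\mathcal R_+$ via Proposition~\ref{prop:heatkernelestimate} and Bishop--Gromov. Your layer-cake/Fubini manipulation is the same computation the paper carries out by writing the kernel integral as $\int_0^1(\cdot)V'(s)\,ds$ (with $V(s)=\mathrm{Vol}(B_{g_0}(x_0,s))\,k(x_0,s)$) and integrating by parts, and the dichotomy $s\leq\sqrt t$ versus $\sqrt t<s\leq1$ is exactly the paper's split into $\mathbf{IV}$ and $\mathbf{III}$.

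The one point you should tighten is the boundary contribution from $s\geq1$. You claim this tail is ``absorbed into the constant $\hat\Lambda$,'' but $\hat\Lambda$ from Lemma~\ref{lma:localMP-nonzero} depends only on $(n,\a,\b)$, whereas the tail is of size
$$\frac{C}{\mathrm{Vol}(B_{g_0}(x_0,\sqrt t))}\,e^{-1/(Ct)}\int_{B_{g_0}(x_0,1)}\mathcal R_+\,d\mathrm{vol}_{g_0},$$
which carries the factor $k(x_0,1)$ and so cannot be subsumed in a metric-independent constant. The paper instead bounds this term by $C\Lambda_0 t^{-1}$, which is what is needed, and which you should reproduce. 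To justify the pointwise bound $k\lesssim\Lambda_0$ feeding into it, note that $s\mapsto\int_{B(s)}|\mathcal R_{g_0}|$ is non-decreasing, so averaging $\int_{1/2}^1 s\,k(s)\,ds\leq\Lambda_0$ together with the doubling $V(1)\leq C(n)V(1/2)$ (from $\Ric(g_0)\geq -(n-1)$ on the unit ball) gives $k(x_0,\tfrac12)\leq C(n)\Lambda_0$; running the whole argument with $\Omega=B_{g_0}(x_0,\tfrac12)$ (equivalently, rescaling so that $r=2$) then makes the tail $\leq C(n,\a)\Lambda_0 t^{-1}$ cleanly. With that adjustment, your argument closes exactly as the paper's does.
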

\begin{proof}
By rescaling, we will assume $r=1$. By Lemma~\ref{lma:evo-R}, $\ell(x,t):=(\mathcal{R}_{g(t)}(x))_+$ satisfies 
\begin{equation}
\begin{split}
\yheat \ell=\mathcal{R}^2\leq \mathcal{R}\ell
\end{split}
\end{equation}
in the sense of barrier, whenever $\ell>0$. This this fulfils the assumption in Lemma~\ref{lma:localMP-nonzero} with $\b=1$. In particular, 
\begin{equation}\label{eqn:localized-scalar-kernel}
\ell(x_0,t)\leq \hat \Lambda +e^{\hat Lt} \int_{\Omega} K_\Omega(x,t;y,0) \mathcal{R}_+(y)\,d\mathrm{vol}_{y,g_0}
\end{equation}
where $\Omega=B_{g_0}(x_0,1)$.

We now estimate the kernel representation. We use $C_i$ to denote any constant depending only on $n,\a$. We also denote 
\begin{equation}
V(r):=\mathrm{Vol}_{g_0}\left(B_{g_0}(x_0,r)\right)\cdot k(x_0,r)=\int_{B_{g_0}(x_0,r)} \mathcal{R}_+(y)\,d\mathrm{vol}_{g_0}
\end{equation}
for notation convenience. By  Proposition~\ref{prop:heatkernelestimate} and co-area formula, 
\begin{equation}
\begin{split}
&\quad \int_{\Omega} K_\Omega(x,t;y,0) \mathcal{R}_+(y)\,d\mathrm{vol}_{y,g_0}\\
&\leq \int_{B_{g_0}(x_0,1)} \frac{C_1}{\mathrm{Vol}_{g_0}\left(B_{g_0}(x_0,\sqrt{t}) \right)}\cdot \exp\left(-\frac{d_{g_0}(x,y)^2}{C_1t} \right)\mathcal{R}_+(y)\,d\mathrm{vol}_{y,g_0}\\
&=\int^1_0 \frac{C_1}{\mathrm{Vol}_{g_0}\left(B_{g_0}(x_0,\sqrt{t}) \right)}\cdot \exp\left(-\frac{r^2}{C_1t} \right) V'(r)\, dr\\
&=\frac{C_1}{\mathrm{Vol}_{g_0}\left(B_{g_0}(x_0,\sqrt{t}) \right)}\cdot \exp\left(-\frac{1}{C_1t} \right) V(1)\\
&\quad +\int^1_0 \frac{2r V(r)}{t\cdot \mathrm{Vol}_{g_0}\left(B_{g_0}(x_0,\sqrt{t}) \right)}\cdot \exp\left(-\frac{r^2}{C_0t} \right) dr\\
&=\mathbf{I}+\mathbf{II}.
\end{split}
\end{equation}

By volume comparison using $g_0$ and $\Ric(g_0)\geq -(n-1)$, for $t\in (0,1]$,
\begin{equation}\label{eqn:I-scalar-local-1}
\begin{split}
\mathbf{I}&\leq \frac{\mathrm{Vol}_{g_0}\left(B_{g_0}(x_0,1) \right)}{\mathrm{Vol}_{g_0}\left(B_{g_0}(x_0,\sqrt{t}) \right)}\cdot \exp\left(-\frac{1}{C_1t} \right) C_1\cdot k(x_0,1)\\
&\leq \frac{C_2}{t^{n/2}}\exp\left(-\frac{1}{C_1t} \right)  \Lambda_0\leq C_3\Lambda_0 t^{-1}
\end{split}
\end{equation}
where we used \cite[Lemma 3.1]{ChanLee2023}.

Similarly,
\begin{equation}
\begin{split}
\mathbf{II}&= \left(\int^1_{\sqrt{t}}+\int^{\sqrt{t}}_0\right)\frac{2r}{t}\cdot \frac{ \mathrm{Vol}_{g_0}\left(B_{g_0}(x_0,r) \right)}{ \mathrm{Vol}_{g_0}\left(B_{g_0}(x_0,\sqrt{t}) \right)}\cdot \exp\left(-\frac{r^2}{C_0t} \right)\cdot k(x_0,r) dr\\
&=\mathbf{III}+\mathbf{IV}.
\end{split}
\end{equation}

The straightforward set inclusion yields 
\begin{equation}\label{eqn:I-scalar-local-2}
\begin{split}
\mathbf{IV}\leq \frac{2}{t} \int^{\sqrt{t}}_0 r\, k(x_0,r) \, dr\leq 2\Lambda_0 t^{-1}
\end{split}
\end{equation}
while we can apply volume comparison to deduce 
\begin{equation}\label{eqn:I-scalar-local-3}
\begin{split}
\mathbf{III}&\leq  \frac{C_1}{t}\int^1_{\sqrt{t}} \left(\frac{r}{\sqrt{t}}\right)^{n}\cdot \exp\left(-\frac{r^2}{C_0t} \right)\cdot  r\,k(x_0,r) dr\leq  C_2\Lambda_0 t^{-1}.
\end{split}
\end{equation}

The assertion follows from combining \eqref{eqn:localized-scalar-kernel} with \eqref{eqn:I-scalar-local-1}, \eqref{eqn:I-scalar-local-2} and \eqref{eqn:I-scalar-local-3}.
\end{proof}

\section{Existence of immortal Yamabe flow}

In this section, we will construct long-time solution to locally conformally flat Yamabe flow from metrics. When the initial metric $g_0$ is of bounded curvature, this had been considered by Ma \cite{Ma2016}, under a slightly weaker asymptotic assumption. 

\subsection{Barrier function from Poisson equation}

The method by Ma is based on constructing barrier function from solution to Poission equation. We need part of the argument in \cite{Ma2016}.

\begin{lma}\label{lma:poisson}
Suppose $(M^n,g_0)$ is a complete non-compact manifold such that $\Ric\geq0$ and 
\begin{equation}\label{eqn:ass-kxr}
\int^\infty_0 r k(x,r)\,dr \leq \Lambda_0
\end{equation}
for all $x\in M$ and $\Lambda_0>0$, then there exists a bounded non-positive smooth function $w$ on $M$ such that $\Delta_{g_0}w=\frac{n-2}{4(n-1)}\mathcal{R}(g_0)$. 
\end{lma}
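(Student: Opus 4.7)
The plan is to construct $w$ as a Green's function convolution with the scalar curvature. Since $\Ric(g_0)\ge 0$ implies $\mathcal{R}(g_0)\ge 0$, once the positive minimal Green's function $G$ of $-\Delta_{g_0}$ is available, the natural candidate is
\begin{equation*}
w(x) = -\frac{n-2}{4(n-1)}\int_M G(x,y)\,\mathcal{R}(g_0)(y)\,d\mathrm{vol}_{g_0}(y),
\end{equation*}
which is automatically non-positive (both factors of the integrand are $\ge 0$) and formally satisfies $\Delta_{g_0} w = \frac{n-2}{4(n-1)}\mathcal{R}(g_0)$. Smoothness will then follow from standard elliptic regularity once boundedness is in hand.

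The crucial step is showing uniform boundedness of this integral in $x$. On a manifold with $\Ric\ge 0$, the Li--Yau upper bound gives $G(x,y)\le C(n)\int_{d_{g_0}(x,y)}^\infty \frac{r\,dr}{\mathrm{Vol}_{g_0}(B_{g_0}(x,r))}$, so Fubini combined with the coarea identity yields
\begin{equation*}
\int_M G(x,y)\,\mathcal{R}(g_0)(y)\,d\mathrm{vol}_{g_0} \le C \int_0^\infty \frac{r}{\mathrm{Vol}_{g_0}(B_{g_0}(x,r))}\int_{B_{g_0}(x,r)}\mathcal{R}(g_0)\,d\mathrm{vol}_{g_0}\,dr = C\int_0^\infty r\,k(x,r)\,dr \le C\Lambda_0,
\end{equation*}
uniformly in $x$. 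This packages the hypothesis directly into the bound $\|w\|_\infty \le C\Lambda_0$, and the rearrangement above is exactly the reason the integrand $r\,k(x,r)$ arises in the statement.

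The main obstacle is that the hypothesis does not a priori guarantee that $(M,g_0)$ is non-parabolic, so the positive Green's function may fail to exist globally. I would resolve this by an exhaustion argument: let $G_R$ denote the Dirichlet Green's function on $B_{g_0}(x_0,R)$, which always exists and is positive, and set
\begin{equation*}
w_R(x) = -\frac{n-2}{4(n-1)}\int_{B_{g_0}(x_0,R)} G_R(x,y)\,\mathcal{R}(g_0)(y)\,d\mathrm{vol}_{g_0}.
\end{equation*}
The Li--Yau-type pointwise upper bound extends to $G_R$ with constants depending only on $n$ (via volume comparison under $\Ric \ge 0$), so the same Fubini computation gives $-C\Lambda_0 \le w_R \le 0$ uniformly in $R$, together with $\Delta_{g_0} w_R = \frac{n-2}{4(n-1)}\mathcal{R}(g_0)$ on $B_{g_0}(x_0,R)$. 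Interior Schauder estimates and a diagonal Arzel\`a--Ascoli extraction then produce a subsequential smooth limit $w$ defined on all of $M$ with the required bounded, non-positive, smooth solution to the Poisson equation.
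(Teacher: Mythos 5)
Your construction is correct and, in substance, re-derives the Poisson-equation solvability result that the paper simply invokes as a black box: the paper's proof of this lemma is a one-line citation to Corollary~2.1 of Ni--Shi--Tam \cite{NiShiTam2001} together with a translation, and Ni--Shi--Tam's argument is precisely the Li--Yau Green's function bound combined with the Fubini/coarea rearrangement that you write out. In particular your computation
\begin{equation*}
\int_M G(x,y)\,\mathcal{R}(g_0)(y)\,d\mathrm{vol}_{g_0}\le C\int_0^\infty r\,k(x,r)\,dr
\end{equation*}
is exactly the reason the quantity $\int_0^\infty r\,k(x,r)\,dr$ appears in the hypothesis, and the sign convention in your ansatz makes $w\le 0$ automatic, so no translation step is needed. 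Two remarks on the points you flagged as potential obstacles. First, the non-parabolicity worry is actually vacuous here: unless $\mathcal{R}\equiv 0$ (in which case $w\equiv 0$ works), pick $y_0$ and $\delta>0$ with $\int_{B_{g_0}(y_0,\delta)}\mathcal{R}>0$; then for $r$ large, $k(x,r)\ge c_0/\mathrm{Vol}_{g_0}(B_{g_0}(x,r))$, so the hypothesis forces $\int^\infty r\,\mathrm{Vol}_{g_0}(B_{g_0}(x,r))^{-1}dr<\infty$, which is Li--Yau's non-parabolicity criterion, and the global minimal Green's function exists directly. Your exhaustion by Dirichlet Green's functions is a perfectly safe alternative, but the clean statement you rely on --- that $G_R(x,y)\le C(n)\int_{d(x,y)}^{CR}s\,\mathrm{Vol}_{g_0}(B_{g_0}(x,s))^{-1}ds$ uniformly in $R$ --- deserves a word of justification (it follows from the Li--Yau heat kernel upper bound together with the lower bound $\lambda_1(B_{g_0}(x_0,R))\ge c(n)R^{-2}$, which gives the needed large-time decay of the Dirichlet heat kernel); this is exactly the technical input that the Ni--Shi--Tam paper packages. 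Second, a minor wording point: smoothness of $w$ comes from elliptic regularity because $\mathcal{R}(g_0)$ is smooth, independent of boundedness --- boundedness is only what makes the construction converge and meets the ``bounded'' requirement in the statement.
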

\begin{proof}
The existence of $ w$ where $\Delta \hat u=\mathcal{R}$ follows from  \cite[Corollay  2.1]{NiShiTam2001} and translation. 
\end{proof}

\medskip

In particular, the existence of $u$ from Lemma~\ref{lma:poisson} allow us to construct Yamabe flow which is uniformly conformal.
\begin{prop}\label{prop:immortal-sol-1}
Suppose $(M,g_0)$ is a complete non-compact manifold as in Lemma~\ref{lma:poisson}. Then there is a immortal solution $g(t),t\in [0,+\infty)$ to the Yamabe flow starting from $g_0$ such that $$e^\frac{4w}{n-2}g_0\leq g(t)\leq  g_0$$ on $M\times [0,+\infty)$ where $w(x)$ is the function obtained from Lemma~\ref{lma:poisson}. Furthermore, $\mathcal{R}(g(t))\geq 0$ for all $t\in [0,+\infty)$.
\end{prop}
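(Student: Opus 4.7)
The plan is to construct $g(t)$ by a compact-domain exhaustion, using the function $w$ from Lemma~\ref{lma:poisson} as a lower barrier for the conformal factor and the initial metric itself as an upper barrier. Writing $g(t)=u(t)^{4/(n-2)}g_0$ turns the Yamabe flow into the quasi-linear scalar equation
\begin{equation*}
\partial_t u = (n-1)\,u^{-\frac{4}{n-2}}\Delta_{g_0}u-\frac{n-2}{4}\,u^{\frac{n-6}{n-2}}\mathcal{R}(g_0),
\end{equation*}
which is uniformly parabolic as long as $u$ stays in a compact subset of $(0,\infty)$. I would fix a smooth exhaustion $\Omega_1\Subset\Omega_2\Subset\cdots\Subset M$ with $\bigcup_j\Omega_j=M$ and solve this PDE on each $\Omega_j$ as the Dirichlet initial-boundary value problem $u_j(\cdot,0)\equiv 1$, $u_j|_{\partial\Omega_j}\equiv 1$; short-time existence is standard for quasi-linear parabolic equations with smooth, strictly positive data.

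The heart of the argument is to produce two barriers. The constant $u\equiv 1$ is a supersolution because $\Ric(g_0)\geq 0$ forces $\mathcal{R}(g_0)\geq 0$, making the right-hand side non-positive. For the subsolution I would test $\hat u=e^w$: using $\Delta_{g_0}e^w=e^w(|\nabla w|^2+\Delta_{g_0}w)$ together with the Poisson equation $\Delta_{g_0}w=\frac{n-2}{4(n-1)}\mathcal{R}(g_0)$, all $\mathcal{R}(g_0)$ contributions cancel exactly, and only the non-negative term $(n-1)e^{\frac{n-6}{n-2}w}|\nabla w|^2$ survives, so $\hat u$ is a stationary subsolution. Since $w\leq 0$ implies $e^w\leq 1$ on the whole parabolic boundary of $\Omega_j\times[0,T]$, the comparison principle yields
\begin{equation*}
e^w\leq u_j\leq 1 \quad\text{on } \Omega_j\times[0,T]
\end{equation*}
for every $T>0$. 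Because $w$ is bounded by Lemma~\ref{lma:poisson}, $u_j$ is confined to a compact subset of $(0,\infty)$ independent of $j$ and $T$, which keeps the equation uniformly parabolic, precludes finite-time singularities, and extends $u_j$ to all $t\geq 0$.

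With this uniform two-sided bound, the equation is uniformly parabolic with smooth coefficients in the interior of each $\Omega_j$, so standard interior parabolic regularity furnishes $C^{k,\alpha}$ bounds on $u_j$ on each compact subset of $M\times[0,\infty)$, uniform in $j$ once $j$ is large enough. A diagonal subsequence then converges smoothly on compact sets to a global smooth solution $u$ with $e^w\leq u\leq 1$; the metric $g(t):=u(t)^{4/(n-2)}g_0$ is the desired immortal Yamabe flow, and the conformal sandwich $e^{4w/(n-2)}g_0\leq g(t)\leq g_0$ passes to the limit. For the scalar-curvature non-negativity I would invoke Lemma~\ref{lma:scalar-lower}: since the conformal factor satisfies $\alpha^{-1}g_0\leq g(t)\leq g_0$ with $\alpha:=e^{-4(\inf_M w)/(n-2)}$ and $\Ric(g_0)\geq 0\geq -(n-1)r^{-2}$ for every $r>0$, applying that lemma on $B_{g_0}(x_0,r)$ with $r$ large enough that $Tr^2\geq t$ gives $\mathcal{R}(g(t))(x_0)\geq -\Lambda_1 r^{-2}$ with $\Lambda_1$ independent of $r$; letting $r\to\infty$ forces $\mathcal{R}(g(t))\geq 0$. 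The main obstacle is the subsolution step: it rests on the exact cancellation produced by $w$ solving the Poisson equation, which is precisely what the integral assumption on $k(x,r)$ supplies through Lemma~\ref{lma:poisson}.
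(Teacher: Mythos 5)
Your proof is correct and follows essentially the same strategy as the paper: Dirichlet approximation on a compact exhaustion, with $1$ as a supersolution (from $\mathcal{R}(g_0)\geq 0$) and $e^w$ as a stationary subsolution where the Poisson equation for $w$ cancels the scalar-curvature term exactly, followed by Schauder estimates and a diagonal limit, with $\mathcal{R}(g(t))\geq 0$ obtained from Lemma~\ref{lma:scalar-lower} as $r\to\infty$. The paper phrases the subsolution step via the logarithm $v=\log u_k$ and a minimum principle applied to $v-w+\varepsilon t$, which is the same cancellation computation written one step earlier; your direct barrier comparison for $u$ itself is an equivalent and slightly cleaner presentation of the identical idea.
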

\begin{proof}
This follows from a similar argument of \cite[Theorem 5]{Ma2016}, see also \cite{ChenZhu2002}. For the sake of convenience, we include the proof for readers' convenience.

We consider the dirichlet problem:
\begin{equation}\label{Dirichletproblemofu}
\begin{cases}
\displaystyle   \partial_tu^N = (n-1)N\left[\Delta_{g_0}u-\frac{n-2}{4(n-1)}\mathcal{R}_{g_0}u\right],\text{ if }(x,t)\in\Omega\times(0,\infty);\\[2mm]
    u(x,t)>0,\text{ if }(x,t)\in\Omega\times(0,\infty);\\[2mm]
    u(x,0)=1,\text{ if }x\in\Omega;\\[2mm]
    u(x,t)=1,\text{ if }(x,t)\in\partial\Omega\times(0,T).
\end{cases}
\end{equation}
where $N=\frac{n+2}{n-2}$, on pre-compact domain $\Omega$ in $M$ with smooth $\partial\Omega$. This is equivalent to the Yamabe flow on $\Omega$ in the sense that $g(t):=u^\frac4{n-2}g_0$ is a solution to Yamabe flow. 

We take a compact exhaustion $\{\Omega_k\}_{k=1}^\infty$ of $M$ with smooth boundary. By \cite[Proposition 2.2]{ChenZhu2002}, each $\Omega_k$ admits an unique classical solution $u_k$ on $\Omega_k\times [0,+\infty)$. Furthermore since $\mathcal{R}_{g_0}\geq 0$, maximum principle implies $u_k(x,t)\leq 1$ on $\Omega_k\times [0,+\infty)$. 

We now show the lower bound. Fix $T>0$ and let $w$ be the function such that $\frac{n-2}{4(n-1)} \mathcal{R}_{g_0}=\Delta_{g_0}w$ on $M$ obtained from Lemma~\ref{lma:poisson}. Then the function $v=\log u_k$ satisfies
\begin{equation}
\partial_t (v-w) \geq (n-1)e^{-(N-1)v} \cdot  \Delta_{g_0}(v-w) 
\end{equation}
where $v=0$ on $\Omega\times \{0\}\bigcup \partial\Omega\times (0,T)$. The function $v_\e:=v-w+\e t$ therefore satisfies 
\begin{equation}
\partial_t v_\e \geq \e +(n-1) e^{-(N-1)v}\Delta_{g_0}v_\e
\end{equation}
and hence minimum principle implies that its minimum is not attained at interior point. Since $w$ is non-positive, by letting $\e\to0$ we see that $u_k\geq w$ on $\Omega_k\times [0,T]$ for all $k$ and $T>0$. Hence, we have shown that for all $k\in\mathbb{N}$ and $t\in [0,+\infty)$, 
\begin{equation}
1\geq u_k(x,t)\geq w.
\end{equation}

Now the existence of immortal solution on $M$  uniformly conformal to initial data, follows from sub-sequential limit of $u_k$ using local parabolic Schauder estimate. 
\end{proof}

\begin{rem}
Indeed by \cite[Theorem 1.1]{NiShiTam2001}, in order to ensure existence using Dirichlet approximation, it is sufficient to requires \eqref{eqn:ass-kxr} to hold for some $x_0\in M$ instead of all $x\in M$. The non-negativity of scalar curvature follows from applying Lemma~\ref{lma:scalar-lower} with $r\to+\infty$.
\end{rem}

\subsection{Curvature estimate along Yamabe flow}
In this section, we want to show that the solution obtained from Proposition~\ref{prop:immortal-sol-1} has uniform curvature estimate if the initial metric $g_0$ satisfies the assumptions in Theorem~\ref{thm:main}. 

\begin{prop}\label{prop:cur-est-k}
For $n\geq 5$. Suppose $g(t),t\in [0,+\infty)$ is a complete solution to the Yamabe flow such that $
\a^{-1}g_0\leq g(t)\leq \a g_0$
for some $\a>1$ and for all $t\geq 0$. There exists $\e_0(n,\a)>0$ such that if the initial metric $g_0$ is locally conformally flat and satisfies 
\begin{enumerate}
\item[(i)] $\Ric(g_0)\geq 0$;
\item[(ii)] $ \int^\infty_0 r \fint_{B_{g_0}(x,r)}\mathcal{R}(g_0)\, d\mathrm{vol}_{g_0}\,  dr\leq \e_0$, for all $x\in M$, 
\end{enumerate}
then the Yamabe flow $g(t)$ satisfies 
$\Ric(g(t))\geq 0$ and $|\Rm(g(t))|\leq 2t^{-1}$ on $M\times (0,+\infty)$.
\end{prop}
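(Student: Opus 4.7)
The plan is a bootstrap/continuity argument resting on one structural observation: since $g(t)=u^{4/(n-2)}g_0$ is locally conformally flat for every $t$, the Weyl tensor vanishes and the Riemann tensor is determined algebraically by the Ricci tensor and metric,
\begin{equation*}
R_{ijkl}=\tfrac{1}{n-2}\bigl(R_{ik}g_{jl}-R_{il}g_{jk}+R_{jl}g_{ik}-R_{jk}g_{il}\bigr)-\tfrac{\mathcal{R}}{(n-1)(n-2)}\bigl(g_{ik}g_{jl}-g_{il}g_{jk}\bigr).
\end{equation*}
Thus whenever $\Ric(g(t))\geq 0$, the trace identity forces $0\leq\lambda_i\leq\mathcal{R}(g(t))$ for the Ricci eigenvalues, and hence $|\Rm(g(t))|_{g(t)}\leq C(n)\,\mathcal{R}(g(t))$. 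This reduces the desired bound $|\Rm|\leq 2/t$ to a quantitative scalar estimate together with preservation of $\Ric\geq 0$.

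With $\alpha_0=2$ set, I would define
\begin{equation*}
T^*:=\sup\bigl\{T\geq 0 : |\Rm(g(t))|_{g(t)}\leq \alpha_0/t\ \text{and}\ \Ric(g(t))\geq 0\ \text{on}\ M\times(0,T]\bigr\}
\end{equation*}
and, taking $T^*>0$ for granted, argue by contradiction that $T^*=\infty$. Assume $T^*<\infty$. Fix any $x_0\in M$ and any $r>0$. Completeness of $g_0$, $\Ric(g_0)\geq 0\geq-(n-1)r^{-2}$, the uniform conformal bound, the global nonnegativity $\mathcal{R}(g(t))\geq 0$ (itself obtained from Lemma~\ref{lma:scalar-lower} by letting $r\to\infty$, which requires only the conformal comparison), and the bootstrap assumption $|\Rm|\leq\alpha_0/t$ on $(0,T^*]$ supply every hypothesis of Propositions~\ref{prop:Ric-preserved} and~\ref{prop:pseudo-YF}. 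They yield
\begin{equation*}
\Ric(g(t))(x_0)\geq -\Lambda r^{-2},\qquad \mathcal{R}(g(t))(x_0)\leq \Lambda_1 r^{-2}+L\varepsilon_0\,t^{-1}.
\end{equation*}
Because the integral smallness assumption is uniform in $x$, sending $r\to\infty$ gives $\Ric(g(t))\geq 0$ and $\mathcal{R}(g(t))\leq L\varepsilon_0/t$ at every point of $M\times(0,T^*]$. Combined with the LCF identity from the first paragraph, we obtain $|\Rm(g(t))|_{g(t)}\leq C(n)L\varepsilon_0/t$ throughout $(0,T^*]$.

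Choosing $\varepsilon_0=\varepsilon_0(n,\alpha)$ small enough that $C(n)L\varepsilon_0\leq 1$ strictly improves the bootstrap to $|\Rm|\leq 1/t<\alpha_0/t$, and smoothness of $g(t)$ across $t=T^*$ extends this strict improvement to an interval $(0,T^*+\delta]$, contradicting the maximality of $T^*$. Hence $T^*=\infty$, proving the proposition.

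The principal obstacle is securing $T^*>0$ at the outset, since the hypothesis of the proposition does not assume any bound on $|\Rm(g_0)|$. I would address this by exploiting smoothness of $g(t)$ on $M\times(0,\infty)$, which guarantees $|\Rm(g(t))|$ is locally bounded on compact subsets, and combining it with a localized version of Proposition~\ref{prop:pseudo-YF} on fixed $g_0$-balls $B_{g_0}(x_0,R)$ together with the LCF identity to produce a short interval $(0,t_*]$ on which the global bound $|\Rm(g(t))|\leq\alpha_0/t$ holds; once this is in hand the closure step above carries through verbatim.
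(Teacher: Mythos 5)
Your proposal correctly identifies the key structural ingredients (the LCF algebraic identity $|\Rm|\leq C(n)|\Ric|$, Propositions~\ref{prop:Ric-preserved} and~\ref{prop:pseudo-YF}, and the smallness of $\e_0$ to close a bootstrap), but the global continuity argument via $T^\ast$ does not work and, crucially, the gap you acknowledge — securing $T^\ast>0$ — is precisely the hard part of this proposition, and your sketch for resolving it fails.

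The difficulty is that $|\Rm(g_0)|$ is not assumed bounded. Smoothness of $g(t)$ only gives, for each compact set $K$, a time $t_\ast(K)>0$ on which $|\Rm(g(t))|\leq 2/t$ holds on $K$; if $\sup_M|\Rm(g_0)|=\infty$ then $t_\ast(K)\to0$ as $K$ exhausts $M$, so there is no global short interval $(0,t_\ast]$ on which your bootstrap hypothesis holds. All of the localized estimates you invoke (Propositions~\ref{prop:Ric-preserved},~\ref{prop:pseudo-YF} and the heat kernel bound Proposition~\ref{prop:heatkernelestimate}) require the scaling-invariant hypothesis $|\Rm(g(t))|\leq\a t^{-1}$ to hold as an \emph{a priori} input on the full parabolic cylinder, so they cannot by themselves produce the initial interval you need; this is a genuine chicken-and-egg obstruction, not a technicality.

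The paper circumvents it with Hochard's iterated local doubling: one fixes $x_0$, starts from the ball-local base case $|\Rm(g(t))|\leq 2/t$ on $B_{g_0}(x_0,2)\times(0,t_0]$ for some possibly tiny $t_0(x_0)$, then alternately (a) applies the localized pseudo-locality and the LCF identity to get $|\Rm(g(t_0))|\leq C_1\e_0\,t_0^{-1}$ on a slightly smaller ball, and (b) invokes Lemma~\ref{lma:local-doubling} (Shi-type local smoothing for uniformly conformal Yamabe flow) to push the estimate forward on $[t_0,2t_0]$. Because each doubling step shrinks the ball only by $\tilde L_0\sqrt{t_i}$ and the $t_i$ grow geometrically, the geometric series for the total radius loss is controlled, and the iteration runs until it reaches a \emph{uniform} time $T(n,\a)$ independent of $x_0$ and of $t_0$. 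The global bound on $M\times(0,T]$ then follows since $x_0$ was arbitrary, and rescaling extends it to all $t>0$. Your proposal misses both the need for Lemma~\ref{lma:local-doubling} and the geometric-series accounting that makes the terminal time uniform; without these, the argument does not close.
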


Before we prove Proposition~\ref{prop:cur-est-k}, we need a Lemma allowing us to bootstrap from uniform conformal to metric with bounded curvature. 
\begin{lma}\label{lma:local-doubling}
Let $g(t)$ be a solution to locally conformally flat Yamabe flow with initial metric $g_0$ and $x_0\in M$ be such that $B_{g_0}(x_0,r)\Subset M$, $|\Rm(g_0)|\leq r^{-2}$ on  $B_{g_0}(x_0,r)$ and 
\begin{equation}
\a^{-1}g_0\leq g(t)\leq \a g_0
\end{equation}
for $t\in [0,T]$ and for some $\a>1$. Then there exists $\hat T(n,\a), \hat C(n,\a)>0$ such that $|\Rm(g(x_0,t))|\leq \hat  Cr^{-2}$ for all $t\in [0,T\wedge \hat Tr^2]$.
\end{lma}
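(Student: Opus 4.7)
The plan is a short-time bootstrap of Hamilton pseudo-locality type, upgrading the uniform conformal bound into a uniform curvature bound by invoking the localized maximum principles of Section~4. After rescaling so that $r=1$, we have $|\Rm(g_0)|\leq 1$ and $\Ric(g_0)\geq -(n-1)$ on $B_{g_0}(x_0,1)$, together with the global pinching $\a^{-1}g_0\leq g(t)\leq \a g_0$ on $[0,T]$. Fix a large constant $K_0 = K_0(n,\a)$ to be determined, and set
\[
T^* := \sup\bigl\{\,s\in[0,T]:\ |\Rm(g(t))|\leq K_0 \text{ on } B_{g_0}(x_0,3/4)\times[0,s]\,\bigr\}.
\]
Smoothness of $g(t)$ together with the initial bound $|\Rm(g_0)|\leq 1$ forces $T^*>0$.

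On the interval $(0,\,T^*\wedge \a K_0^{-1}]$ one has $|\Rm(g(t))|\leq K_0\leq \a/t$ on $B_{g_0}(x_0,3/4)$; combined with the conformal pinching and the scalar lower bound supplied by a pointwise application of Lemma~\ref{lma:scalar-lower}, all of hypotheses (i)--(iii) of Lemma~\ref{lma:localMP} become available at any base point $y\in B_{g_0}(x_0,1/2)$ using the sub-ball $B_{g_0}(y,1/8)\subset B_{g_0}(x_0,5/8)$ (with $r=1/8$). At such $y$ I would combine two ingredients. Proposition~\ref{prop:Ric-preserved}, in which the hypothesis $n\geq 5$ is crucial, yields the Ricci lower bound $\Ric(g(y,t))\geq -C(n,\a)$. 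Next, Lemma~\ref{lma:localMP-nonzero} applied to $\ell=\mathcal{R}_+$, which by Lemma~\ref{lma:evo-R} satisfies $\yheat \mathcal{R}_+\leq \mathcal{R}\cdot \mathcal{R}_+$ so that $\b=1<n/2$, combined with the initial bound $\mathcal{R}_+(g_0)\leq n(n-1)$ on $B_{g_0}(y,1/8)$ and the kernel mass inequality $\int_\Omega K_\Omega\,d\mathrm{vol}\leq 1$, yields $\mathcal{R}_+(y,t)\leq C(n,\a)$. Because $g(t)$ remains locally conformally flat, $|\Rm|\leq C(n)|\Ric|$, and since the Ricci lower bound together with the scalar upper bound pin the eigenvalues of $\Ric$ in a uniform compact interval, $|\Rm(g(y,t))|\leq C_1(n,\a)$ for all $y\in B_{g_0}(x_0,1/2)$.

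Setting $K_0 := 2C_1(n,\a)$, a distance-to-boundary weighted barrier argument in the spirit of the proof of Lemma~\ref{lma:localMP} — applying the same estimate at the first point-time where curvature reaches the weighted threshold — extends the pointwise bound from $B_{g_0}(x_0,1/2)$ to the full bootstrap region $B_{g_0}(x_0,3/4)$; continuity then forces $T^*\geq \a K_0^{-1}=:\hat T(n,\a)$. Specializing $y=x_0$ and undoing the rescaling yields the claim with $\hat C = C_1(n,\a)$. The main obstacle is the self-consistent closure of the bootstrap: $K_0$ controls the time window $\hat T=\a/K_0$ on which Lemma~\ref{lma:localMP} is invocable, while $K_0$ itself is determined by the constants output by that lemma, so every constant in the chain must be arranged to depend only on $(n,\a)$. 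A secondary subtlety is the weighted-ball/covering device needed to propagate the pointwise estimate across the bootstrap region, which mirrors the first-time-of-failure argument used internally in Lemma~\ref{lma:localMP}.
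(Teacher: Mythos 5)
The paper's own proof is a one-line citation of \cite[Theorem 2.4]{Cheng2025}, i.e., a Bernstein-type local maximum-principle argument in the spirit of Shi \cite{Shi1989}, applied directly to $\phi\cdot|\Rm(g(t))|^2$ with a cutoff $\phi$ whose spacetime derivative bounds are controlled by the uniform conformal pinching. Your route through Section~4 (heat-kernel estimates plus the localized maximum principles) is genuinely different, but as written it has a gap that the paper's approach sidesteps. First, a structural issue: you define $T^*$ so that $|\Rm|\leq K_0$ holds on $B_{g_0}(x_0,3/4)$, derive the improved bound $|\Rm|\leq C_1$ only on the smaller ball $B_{g_0}(x_0,1/2)$, and then assert that a ``distance-to-boundary weighted barrier argument'' extends the improved bound outward to $B_{g_0}(x_0,3/4)$. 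That is backwards: the weighted argument gives a bound $|\Rm(x,t)|\,\rho(x)^2\leq\Lambda$ that \emph{degrades} toward $\partial B_{g_0}(x_0,1)$, so it cannot upgrade the control in the annulus $B_{g_0}(x_0,3/4)\setminus B_{g_0}(x_0,1/2)$; the continuity argument defining $T^*$ therefore does not close.

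Second, and more seriously, even if one reformulates $T^*$ with the weighted threshold $|\Rm(x,t)|\rho(x)^2\leq K_0$, the application of Lemma~\ref{lma:localMP} at a first-failure point $(y_1,T^*)$ with $\rho(y_1)=2r_1$ small fails for a quantitative reason: after the parabolic rescaling by $r_1$, the hypothesis $|\Rm(\tilde g(t))|\leq\a/t$ forces $t\leq\a/K_0$, but the rescaled failure time is $\tilde T=T^*/r_1^2$, which is unbounded as $r_1\to 0$. So Lemma~\ref{lma:localMP} (and hence Proposition~\ref{prop:Ric-preserved} and Lemma~\ref{lma:localMP-nonzero}, which inherit its hypothesis~(i)) cannot be invoked at the failure time for boundary-adjacent points. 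Shi's Bernstein argument, by contrast, never needs the a priori scale-invariant bound $|\Rm|\leq\a t^{-1}$: it trades this for a cutoff-dependent constant in the final estimate, which is exactly why it works ``from scratch'' to \emph{produce} such a bound. Finally, note that your route requires $n\geq5$ (through Proposition~\ref{prop:Ric-preserved}) and local conformal flatness in an essential way for the Ricci eigenvalue pinning, whereas the stated Lemma~\ref{lma:local-doubling} and the paper's Shi-type proof impose no dimensional restriction; this is a genuine loss of generality, although not one that affects the application in Proposition~\ref{prop:cur-est-k}.
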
 
\begin{proof}
By scaling, we might assume $r=1$. This then follows from the proof of \cite[Theorem 2.4]{Cheng2025} which is based on argument of Shi in \cite{Shi1989}. We remark that the argument of Shi is purely local. 
\end{proof}

Now we are ready to prove Proposition~\ref{prop:cur-est-k} using idea from Hochard \cite{HochardPhd}.
\begin{proof}[Proof of Proposition~\ref{prop:cur-est-k}]

Fix $x_0\in M$. We might assume $\a>10$ for convenience. We might also assume 
\begin{equation}
\a^{-1}g(t)\leq g(s)\leq \a g(t)
\end{equation}
for all $t,s\in [0,+\infty)$. Since the assumptions are scaling invariant, we first claim that $|\Rm(g(t))|\leq 2t^{-1}$ on $B_{g_0}(x_0,1)\times (0,T]$ for some $T(n,\a)>0$. 

Since $B_{g_0}(x_0,2)\Subset M$, we might find $t_0>0$ such that $|\Rm(g(t))|\leq 2t^{-1}$ on $B_{g_0}(x_0,r_0)\times (0,t_0]$ where $r_0=2$. We fix $L_0=\e_0^{-1/2}$. Applying Proposition~\ref{prop:Ric-preserved} on $B_{g_0}(x,L_0\sqrt{t_0})\times [0,t_0]$ where $x\in B_{g_0}(x_0,r_0-L_0\sqrt{t_0})$, we see that 
\begin{equation}
\Ric(g(t))\geq -L_0^{-2}\Lambda(n,\a)t_0^{-1}
\end{equation}
on $B_{g_0}(x_0,r_0-L_0\sqrt{t_0})\times [0,t_0]$. Now we apply Proposition~\ref{prop:pseudo-YF} on $B_{g_0}(x,L_0\sqrt{t_0})\times [0,t_0]$ where $x\in B_{g_0}(x_0,r_0-2L_0\sqrt{t_0})$ to see that 
\begin{equation}
\mathcal{R}(g(t))\leq \Lambda_1(n,\a)L_0^{-2}t_0^{-1}+L(n,\a) \e_0 t^{-1}
\end{equation}
for all $(x,t)\in B_{g_0}(x_0,r_0-2L_0\sqrt{t_0})\times (0,t_0]$. Since $g(t)$ is locally conformally flat, we conclude that 
\begin{equation}
\begin{split}
|\Rm(g(t_0))|&\leq C_n|\Ric(g(t_0))|\\
&\leq C_n \left(\Lambda  + \Lambda_1 +L  \right)\e_0  t_0^{-1}:=C_1(n,\a)\e_0 t_0^{-1}
\end{split}
\end{equation}
on $B_{g_0}(x_0,r_0-2L_0\sqrt{t_0})$.

We apply Lemma~\ref{lma:local-doubling} to the translated Yamabe flow with $r=\sqrt{C_1^{-1}\e_0^{-1}t_0}$ so that 
\begin{equation}
|\Rm(g(t))|\leq \hat C \cdot C_1\e_0t_0^{-1}
\end{equation}
on $B_{g_0}(x_0,r_0-\tilde L_0\sqrt{t_0})\times [t_0,(1+\tilde T)t_0]$ where $$\tilde L_0=2L_0+\sqrt{C_1^{-1}\e_0^{-1}}\quad\text{and}\quad \tilde T=\hat T\cdot C_1^{-1}\e_0^{-1}.$$

We choose $\e_0$ small enough so that $\tilde T>1$ and $\hat C \cdot C_1\e_0 \leq 1$. Hence, 
\begin{equation}
|\Rm(g(t))|\leq 2t ^{-1}
\end{equation}
on $B_{g_0}(x_0,r_0-\tilde L_0\sqrt{t_0})\times [t_0,2t_0]$. We now define $t_i$ and $r_i$ inductively:  $t_{i+1}=2t_i$ and $r_{i+1}=r_i-\tilde L_0\sqrt{t_i}$ for $i\geq 0$. 

By repeating the above argument inductively, we see that if $r_i>0$, then we have $|\Rm(g(t))|\leq 2t^{-1}$ on $(0,t_i]$. We consider $r_N>1\geq r_{N+1}$. We see that $t_N\geq  T(n,\a)$ and thus prove the claim. A rescaling argument show that $|\Rm|\leq 2t^{-1}$ on $M\times (0,+\infty)$. The non-negativity of $\Ric$ follows from Proposition~\ref{prop:Ric-preserved} with $r\to+\infty$.
\end{proof}

Now we are ready to prove Theorem~\ref{thm:main}.
\begin{proof}[Proof of Theorem~\ref{thm:main}]

By Proposition~\ref{prop:immortal-sol-1} and Proposition~\ref{prop:cur-est-k}, we obtain a complete solution to the Yamabe flow $g(t)$ with $0\leq \Ric(g(t))\leq 2t^{-1}$ and 
$$\a^{-1}g_0\leq g(t)\leq g_0$$
for all $t>0$. Now we are able to use the argument of Ma \cite{Ma2016} to draw the flatness using Chow's harnack inequality \cite{Chow1992}, which is valid thanks to bounded curvature for $t>0$. We include the proof for readers' convenience. 

By Harnack inequality, $t\mathcal{R}(x,t)$ is non-decreasing for $t>0$. Hence, for $t\geq1$ and $\tau\in[\sqrt{t},t]$, we have $\tau\mathcal{R}(\tau) \geq\sqrt{t}\mathcal{R}(\sqrt{t})$. The equation of Yamabe flow and metric equivalence imply
    \begin{equation}
        \int_{0}^{t}\mathcal{R}(x,\tau)d\tau=-\frac{4}{n-2}\log u(x,t) \leq C(n,\a)
    \end{equation}
    for all $x\in M$. Using monotonicity, we get
    \begin{equation}
    \begin{split}
  \frac12 \log t\cdot \sqrt{t}\mathcal{R}(x,\sqrt{t}) \leq \int^{t}_{\sqrt{t}}\mathcal{R}(x,\tau)d\tau\leq C(n,\a)
    \end{split}
    \end{equation}
    so that $\lim_{t\to +\infty}t\mathcal{R}(x,t)=0$. By monotonicity, this implies $\mathcal{R}(x,t)=0$ for all $t>0$. Since the flow is smooth up to $t=0$, this proves the Theorem by letting $t\to 0$.
\end{proof}

\end{document}